\tikzset{commutative diagrams/.cd,every label/.append style = {font = \normalsize}}
\tikzset{
  dep u/.style={insert path={-- ++(0,15) node{}}},
  dep r/.style={insert path={-- ++(15,0) node{}}},
  dep d/.style={insert path={-- ++(0,-15) node{}}},
  dep l/.style={insert path={-- ++(-15,0) node{}}},
  recurse lattice path/.code args={#1#2}{
    \ifx#1.\else\tikzset{dep #1,recurse lattice path=#2}\fi
  },
  lattice path/.style={recurse lattice path=#1.}
}
\tikzset{
    fading speed/.code={
        \pgfmathtruncatemacro\tikz@startshading{50-(100-#1)*0.25}
        \pgfmathtruncatemacro\tikz@endshading{50+(100-#1)*0.25}
        \pgfdeclareverticalshading[%
            tikz@axis@top,tikz@axis@middle,tikz@axis@bottom%
        ]{axis#1}{100bp}{%
            color(0bp)=(tikz@axis@bottom);
            color(\tikz@startshading)=(tikz@axis@bottom);
            color(50bp)=(tikz@axis@middle);
            color(\tikz@endshading)=(tikz@axis@top);
            color(100bp)=(tikz@axis@top)
        }
        \tikzset{shading=axis#1}
    }
}
\newcommand{\Mat}{\text{Mat}}
\def\a{\alpha}
\def\b{\beta}
\def\<{\langle}
\def\>{\rangle}
\newcommand{{\BG}}{\ensuremath{\mathbb {G}}\xspace}
\newcommand{{\BK}}{\ensuremath{\mathbb {K}}\xspace}
\newcommand{\BR}{\ensuremath{\mathbb {R}}\xspace}
\newcommand{\CA}{\ensuremath{\mathcal {A}}\xspace}
\newcommand{\CB}{\ensuremath{\mathcal {B}}\xspace}
\newcommand{\CC}{\ensuremath{\mathcal {C}}\xspace}
\newcommand{\CN}{\ensuremath{\mathcal {N}}\xspace}
\newcommand{\CP}{\ensuremath{\mathcal {P}}\xspace}
\newcommand{\CR}{\ensuremath{\mathcal {R}}\xspace}
\def\pos{\text{pos}}
\newcommand{\GL}{\mathrm{GL}}
\DeclareMathOperator{\Gr}{\mathrm{Gr}}
\newtheorem{theorem}{Theorem}
\newtheorem{proposition}[theorem]{Proposition}
\newtheorem{lemma}[theorem]{Lemma}
\newtheorem{lem}[theorem]{Lemma}
\newtheorem{corollary}[theorem]{Corollary}
\theoremstyle{definition}
\newtheorem{definition}[theorem]{Definition}
\newtheorem{example}[theorem]{Example}
\newtheorem{remark}[theorem]{Remark}
\newtheorem*{main}{Theorem A}
\newtheorem*{mainB}{Theorem B}
\numberwithin{equation}{section}
\numberwithin{theorem}{section}
\newcommand{\ip}{\iota_{\text{pre}}  ( \CC_{n-1, k, 2}  )}
\newcommand{\ii}{ \sigma_{k,n}^{-2} \cdot U^-(s_2s_1) \cdot \iota_{\text{inc}} ( \CC_{n-1, k-1, 2} )}
\font\pipefont=lcircle10
\def\elbow{\smash{\raise3pt\hbox{\pipefont\rlap{\rlap{\char'014}\char'016}}}}
\def\halfelbow{\smash{\raise2pt\hbox{\pipefont\rlap{\rlap{\rlap{\char'015}\phantom{\char'017}}}}}}
\def\cross{\smash{\lower5pt\hbox{\rlap{\vrule height16pt}}\raise3pt\hbox{\rlap{\hskip-8pt \vrule height0.4pt depth0pt width16pt}}}}
\newcommand{\Le}{\textup{\protect\scalebox{-1}[1]{L}}}
\begin{document}

\title[]{The $m=2$ Amplituhedron}

\thanks{ }
\author[Huanchen Bao]{Huanchen Bao}
\address{Department of Mathematics, National University of Singapore, Singapore.}
\email{huanchen@nus.edu.sg}

\author[Xuhua He]{Xuhua He}
\address{The Institute of Mathematical Sciences and Department of Mathematics, The Chinese University of Hong Kong, Shatin, N.T., Hong Kong.}
\email{xuhuahe@math.cuhk.edu.hk}
\keywords{Amplituhedron, total positivity, Grassmannians}
\subjclass[2010]{Primary: 14M15. Secondary: 52B99}


\begin{abstract}
The (tree) amplituhedron $\CA_{n, k, m}$  is introduced by Arkani-Hamed and Trnka in 2013 in the study of $\CN=4$ supersymmetric Yang-Mills theory. It is defined in terms of the totally nonnegative Grassmannians. In this paper, we show that the amplituhedron $\CA_{n, k, m}$ for $m=2$ admits a triangulation. Our collection of cells is  constructed via BCFW-type recursion. We also provide a diagrammatic interpretation of our construction.
\end{abstract}

\maketitle

\tableofcontents

\section*{Introduction}

\subsection{} 
The theory of totally positive real matrices was developed in the 1930’s by I.Schoenberg and independently by F.Gantmacher and M.Krein after earlier contributions by M.Fekete and G.Polya in 1912. In \cite{Lu94}, Lusztig established the theory of total positivity for any split reductive group $G$ over $\BR$ and defined totally nonnegative (partial) flag variety $(G/P)_{\ge 0}$. The cell decomposition of $(G/P)_{\ge 0}$ is parameterized by Bruhat intervals in the Weyl group $W$ of $G$. This was conjectured by Lusztig and proved by Riestch \cite{Rie1, Rie2}. Explicit parametrization of each cell is given in \cite{MR}, which plays a crucial role in this paper. 

The totally nonnegative Grassmannian $\Gr_{k,n}^{\ge 0}$ is a special case of the totally nonnegative partial flag variety $(G/P)_{\ge 0}$. 
It is the subset of $\Gr_{k,n}$ whose Pl\"{u}cker coordinates are all nonnegative.  We denote by $\Mat_{n,k}^{\ge 0}$ the set of full rank $n \times k$ matrices with nonnegative $k \times k$ minors. We may identify $\Gr_{k,n}^{\ge 0}$ with $\Mat_{n, k}^{\ge 0}/ \GL^+_{k}$, where  $\GL^+_{k}$ denotes the group of $k \times k$ real matrices with positive determinants. 

The positroid cells of $\Gr_{k, n}^{\ge 0}$ were studied extensively in Postnikov \cite{Pos}. The definitions of the positroid cells and the Rietsch cells are very different. However, it is proved (\cite{TL13}, \cite{Lam14} and \cite{Lu19}) that they agree. In \cite{Pos}, Postnikov studied several combinatorial models of the positroid cells. Among those models, the matroid description \cite[Chap. 3]{Pos} and the plabic graphs \cite[Chap. 13]{Pos} are most relevant for this paper. 

\subsection{}
Let $n,k,m \in \mathbb{Z}_{> 0}$ such that $k+m \le n$. For $Z \in \Mat^{> 0}_{k+m,n}$, we define 
\begin{align*}
	\widetilde{Z} : \Gr_{k,n}^{\ge 0} \longrightarrow \Gr_{k,k+m},\qquad 
		A \mapsto Z \cdot A. 
\end{align*}
The (tree) amplituhedron $\mathcal{A}_{n,k,m}=\mathcal{A}_{n,k,m}(Z)$ is defined to be the image of  $\Gr_{k,n}^{\ge 0}$ under the map $\tilde{Z}$. It was introduced in 2013 by Arkani-Hamed and Trnka \cite{AHT13} in the study of $\CN=4$ supersymmetric Yang-Mills theory.  

Let $\{C_\a\}$ be a collection of cells in $\Gr_{k, n}^{\ge 0}$ with $\dim C_\a =\dim \Gr_{k, k+m}$. We say that $\{C_\alpha \}$  gives a {\it triangulation} of the amplituhedron $\CA_{n, k, m}$, if for any initial data $Z \in \Mat_{k+m, n}^{>0}$, we have  

\begin{itemize}
\item Injectivity: the map $C_\a \mapsto Z \cdot C_\a$ is injective for any $\a$;

\item Disjointness: $Z \cdot C_\a \cap Z \cdot C_\b=\emptyset$ if $\a \neq \b$; 

\item Surjectivity: the union of the images $\cup_{\a} Z \cdot C_\a$ is an open dense subset of $\CA_{n, k, m}(Z)$. 
\end{itemize}

\subsection{} 
The $m=4$ amplituhedron is of immediate relevance to physics. The scattering amplitude in the planar limit for the $\CN=4$ supersymmetric Yang-Mills theory is identified as ``the Volume" of the amplituhedron at $m=4$. The scattering amplitude can be computed using the BCFW recursion introduced by Britto, Cachazo, Feng and Witten in \cite{BCFW}. The BCFW recursion represents the amplitude as a sum over basic building blocks. We refer to \cite{AHBC+16, AHT13} for more background on the physics.
	
The summand of the BCFW recursion can be translated  (\cite{AHBC+16, KWZ}), geometrically, into a collection of cells in the positive Grassmannian $\Gr_{k,n}^{\ge 0}$. We call this collection of cells the BCFW cells. It is conjectured \cite{AHT13} that the BCFW cells triangulate the amplituhedron $\mathcal{A}_{n,k,m}$ at $m=4$.

\subsection{} Although the $m=4$ case is most relevant for physics, the amplituhedron $\CA_{n,k,m}$ for arbitrary $k, n, m$ is an interesting mathematical object. In the case $k+m =n$, we have indeed $\CA_{n,k,m} \cong \Gr_{k,n}^{\ge 0}$.

The amplituhedron at $k=1$, for arbitrary $m$ and $n$, is a classical object called cyclic polytope. The triangulation of cyclic polytopes was studied by Rambau \cite{Ra97}.  All triangulations of $\CA_{n,1,m}$ have been obtained in \cite{Ra97}.
	
Karp and Williams \cite{KW19} obtained the triangulation of the amplituhedron at $m=1$. Galashin and Lam \cite{GL18} established the duality between the triangulation of $\CA_{n,k,m}$ and the triangulation of $\CA_{n, n-m-k,m}$ for even $m$, called the parity duality.
	
Karp, Williams and Zhang  \cite{KWZ} provided a nonrecursive collection of BCFW cells (at $m=4$). They proved that their images in $\CA_{n,2,4}$ (for $k=2$ only) are pairwise disjoint.  They also made a numerical conjecture \cite[Conjecture~8.1]{KWZ} on the triangulation of $\CA_{n,k,m}$ for all even $m$. 
	
There is evidence coming from both mathematics and physics indicating that the amplituhedron $\CA_{n,k,m}$ is better behaved at even $m$ than at odd $m \ge 3$. The $m=1$ case is, in some sense, a singular case from this point of view.
	
The $m=2$ amplituhedron was studied in \cite{KWZ} and \cite{AHTT}. In both papers, the images of certain specific collection of cells in $\Gr_{k,n}^{\ge 0}$ under the amplituhedron map are proved to be pairwise disjoint. 

In a very recent preprint \cite{Luk}, Lukowski studied the structure of the boundary of the $m=2$ amplituhedron and showed that the Euler characteristic of the amplituhedron equals one. In another very recent preprint \cite{LPSV}, Lukowski, Parisi, Spradlin and Volovich characterizes the generalised triangles for the $m=2$ amplituhedron. 
	
\subsection{} In this paper, we prove that the $m=2$ amplituhedron $\CA_{n,k,2}$ admits a triangulation for any initial $Z \in \Mat^{> 0}_{k+2, n}$. Our collection of cells for the triangulation is given by a BCFW-type recursion. 

\begin{main}[Theorem~\ref{thm:main}]
Let $\CC_{n-1, k, 2}$ (resp. $\CC_{n-1, k-1, 2}$) be a collection of cells in $\Gr_{k,n-1}^{\ge 0}$ (resp. $\Gr_{k-1,n-1}^{\ge 0}$) that triangulates $\CA_{n-1,k,2}$ (resp. $\CA_{n-1,k-1,2}$). Then 
\[
\CC_{n, k, 2} = \iota_{\text{pre}}( \CC_{n-1, k, 2}) \cup \sigma_{k,n}^{-2} \cdot U^-(s_2 s_1)\cdot \iota_{\text{inc}} (\CC_{n-1, k-1, 2})
\text{ triangulates } \CA_{n, k, 2}.
\]
 Moreover, the union above is a disjoint union. 
\end{main}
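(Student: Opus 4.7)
The plan is to prove the theorem by induction on $n$, taking as the inductive hypothesis that $\CC_{n-1,k,2}$ and $\CC_{n-1,k-1,2}$ triangulate the smaller amplituhedra $\CA_{n-1,k,2}$ and $\CA_{n-1,k-1,2}$ respectively. For a fixed initial datum $Z\in\Mat_{k+2,n}^{>0}$, I would verify in turn the four conditions that define a triangulation of $\CA_{n,k,2}(Z)$: the correct dimension, cell-wise injectivity of $\widetilde Z$, pairwise disjointness of images (within each of the two families and between them), and surjectivity onto an open dense subset. The ``moreover'' clause---that the union upstairs in $\Gr_{k,n}^{\ge 0}$ is disjoint---then falls out automatically, since cell-wise injectivity of $\widetilde Z$ combined with disjointness of images downstairs forces disjointness upstairs.

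The first step is a dimension check. Each cell in $\iota_{\text{pre}}(\CC_{n-1,k,2})$ inherits dimension $2k$ from its preimage because $\iota_{\text{pre}}$ is a cell-preserving embedding. For the second family, $\iota_{\text{inc}}$ preserves cell dimensions (so its image lies in a cell of dimension $2(k-1)$), left translation by $U^-(s_2s_1)$ increases the dimension of a Rietsch cell by $\ell(s_2s_1)=2$ via the Marsh--Rietsch parametrization, and the cyclic shift $\sigma_{k,n}^{-2}$ is a homeomorphism; thus each such cell has dimension $2k$ as required. The next step is to reduce the analytic content of triangulation to the inductive hypothesis by setting up two factorizations of the amplituhedron map: on the first family, $\widetilde Z$ should factor through the amplituhedron map $\widetilde{Z'}$ for a matrix $Z'$ obtained from $Z$ by deleting the appropriate column; on the second family, after absorbing the left action of $\sigma_{k,n}^{-2}\cdot U^-(s_2s_1)$ into a change of affine chart on $\Gr_{k,k+2}$, it should factor through $\widetilde{Z''}$ for a suitable $(k+1)\times(n-1)$ matrix $Z''$ derived from $Z$. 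Granting these factorizations, cell-wise injectivity and intra-family disjointness follow directly from the inductive triangulations of $\CA_{n-1,k,2}(Z')$ and $\CA_{n-1,k-1,2}(Z'')$.

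The heart of the argument, and the main obstacle, is the mutual disjointness of the two families' images in $\CA_{n,k,2}(Z)$ together with surjectivity onto an open dense subset. For mutual disjointness I would seek a geometric criterion on a point $y=\widetilde Z(A)\in\CA_{n,k,2}(Z)$ that detects which family $A$ belongs to; a natural candidate is the sign or vanishing of a distinguished Pl\"ucker coordinate of $y$ tied to the column that is being singled out by $\iota_{\text{pre}}$ on the first side and by $U^-(s_2s_1)$ on the second. For surjectivity I would construct an explicit ``inverse BCFW step'': given a generic $y\in\CA_{n,k,2}(Z)$, either $y$ admits a preimage whose singled-out column may be normalized to zero (reducing to the first family via the inductive hypothesis), or after applying $\sigma_{k,n}^{2}$ and a specific element of $U^+(s_2s_1)$ a chosen preimage takes the form required to lie in $\iota_{\text{inc}}(\Gr_{k-1,n-1}^{\ge 0})$ (reducing to the second family). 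A dimension count, together with the inductive density statements, would then upgrade this pointwise recipe to density of $\bigcup_\a \widetilde Z(C_\a)$ inside $\CA_{n,k,2}(Z)$.
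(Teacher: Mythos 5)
Your high-level architecture is close to the paper's: factorization through smaller amplituhedron maps for injectivity, a sign invariant for cross-family disjointness, and a reduction argument for surjectivity. But there are two concrete gaps, and your surjectivity route differs from the paper's in a way worth flagging.

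First, the injectivity argument for the second family is incomplete. A cell $D=\sigma_{k,n}^{-2}\cdot U^-(s_2s_1)\cdot\iota_{\text{inc}}(C)$ has dimension $2k = 2 + 2(k-1)$: two parameters $(a_1,a_2)$ from $U^-(s_2s_1)$ and $2(k-1)$ from $C$. The inductive hypothesis plus the factorization only handles the $2(k-1)$-dimensional slice at fixed $(a_1,a_2)$; you still must show that slices with distinct $(a_1,a_2)$ have \emph{disjoint} images under $\widetilde Z$. That is exactly where the paper does the hard work (Lemma~\ref{lem:inj}(2)): using the normal form $Z' = [\,I_{k+m}\,|\,0\,]\cdot g$ with $g\in U^+(w_0 w_0^H)$, they push $y_2(a_2)y_1(a_1)$ past $g$ via \cite[\S1.3]{Lu94} to get $g\cdot y_2(a_2)y_1(a_1)=y_2(b_2)y_1(b_1)\cdot\chi\cdot g'$ with $(a_1,a_2)\mapsto(b_1,b_2)$ bijective, and then read off disjointness directly in $\Gr_{k,k+2}^{\ge 0}$. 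Your phrase ``absorbing the left action into a change of affine chart'' does not capture this two-parameter disjointness claim, which is genuinely the crux of the injectivity lemma.

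Second, for cross-family disjointness your idea (sign of a distinguished Pl\"ucker-type invariant tied to the singled-out columns) is the right one, and corresponds to the paper's function $[1,n-1](A)=\det[A,Z_1,Z_{n-1}]\bmod\GL_1^+$. But your proposal implicitly treats ``sign or vanishing'' as a sharp separator; the paper must first show the invariant cannot take the value $0$ on $Z\cdot C$ or $Z\cdot D$, and for that it uses Corollary~\ref{cor:open} (openness of images via Brouwer's invariance of domain, which in turn rests on injectivity). This ordering of the lemmas---injectivity $\Rightarrow$ openness $\Rightarrow$ nonvanishing of the sign $\Rightarrow$ disjointness---is an essential part of the argument that your sketch omits.

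Finally, your surjectivity strategy (a direct ``inverse BCFW step'' dichotomy on a generic point $y$) is a different route from the paper's. The paper instead reduces to the cyclic polytope case: it proves the two-cell cover statement first at $k=1$ using Rambau's triangulation of $\CA_{n,1,2}$ (Proposition~\ref{prop:Ra97}), then bootstraps via $\iota_{\text{inc}}$ and the lifting Lemma~\ref{lem:Zg} (which lets one transport a covering equality through left multiplication by $U^-(s_j)$). Your direct dichotomy, if it works, would be a slicker argument, but as stated it needs justification that the two cases you describe actually exhaust the generic preimages, and you would effectively have to re-derive the $k=1$ statement as a special case anyway. The paper's detour through $k=1$ is precisely what makes the density claim checkable.
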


Diagrammatically, the main theorem reads as follows:
 $$
\quad\begin{tikzpicture}[baseline=(current bounding box.center)]
\pgfmathsetmacro{\radius}{1.7};
\draw[thick](0,0)circle[radius=\radius];
\node[]at (0,0) {\Large $\CA_{\scriptstyle n,k,2}$};
\end{tikzpicture}
\quad
=
\quad
\begin{tikzpicture}[baseline=(current bounding box.center)]
\pgfmathsetmacro{\radius}{1.7};
\tikzstyle{out1}=[inner sep=0,minimum size=2.4mm,circle,draw=black,fill=black,semithick]
\tikzstyle{in1}=[inner sep=0,minimum size=2.4mm,circle,draw=black,fill=white,semithick]
\draw[thick](0,0)circle[radius=\radius];

\node[circle,draw=black,fill=white,semithick] (Ak) at (0,0.5) {$\scriptstyle \CA_{ n-1,k,2}$};

\node[inner sep=0](bn-1)at(135:\radius){};
\node[xshift=-1ex] at(135:\radius+.24){\Large$\scriptstyle n-1$};

\node[inner sep=0](b1)at(135-1*90:\radius){};
\node at(135-1*90:\radius+.24){$1$};

\node[out1](in)at(-90: \radius/2){};
\node[inner sep=0](bn)at(-90:\radius){};
\node at(-90:\radius+.24){$n$};
\node[thick] at(90:\radius/2+.6){$\dots$};
\path[thick](bn.center)edge(in) (bn-1)edge(Ak) (b1)edge(Ak);
\end{tikzpicture}
\quad
+ 
\quad\begin{tikzpicture}[baseline=(current bounding box.center)]

\tikzstyle{out1}=[inner sep=0,minimum size=2.4mm,circle,draw=black,fill=black,semithick]
\tikzstyle{in1}=[inner sep=0,minimum size=2.4mm,circle,draw=black,fill=white,semithick]

\pgfmathsetmacro{\radius}{1.7};
\draw[thick](0,0)circle[radius=\radius];

\node[circle,draw=black,fill=white,semithick] (Ak-1) at (0,0.5) {$\scriptstyle \CA_{ n-1,k-1,2}$};

\node[inner sep=0](b2)at(135-1*90:\radius){};
\node at(135-1*90:\radius+.24){$2$};

\node[inner sep=0](b1)at(135-2*90:\radius){};
\node at(135-2*90:\radius+.24){$1$};

\node[inner sep=0](bn)at(-90:\radius){};
\node at(-90:\radius+.24){$n$};

\node[inner sep=0](bn-1)at(-135:\radius){};
\node[yshift=-1ex] at(-135:\radius+.24){\Large$\scriptstyle n-1$};

\node[inner sep=0](bn-2)at(135:\radius){};
\node[xshift=-1ex] at(135:\radius+.24){\Large$\scriptstyle n-2$};
\node[out1](i1)at(-45:\radius/2){};
\node[out1](in-1)at(-135:\radius/2){};
\node[in1](in)at(-90: \radius/2){};

\path[thick](b1.center)edge(i1) (b2.center)edge(Ak-1) (in-1)edge(bn-1.center) (in)edge(bn.center) (Ak-1)edge(bn-2.center) (i1)edge(in) (in)edge(in-1) (i1)edge  (Ak-1) (in-1)edge (Ak-1);

\node[thick] at(90:\radius/2+.7){$\dots$};
\end{tikzpicture}\quad .
$$
We refer to \S\ref{5-diag} for detailed discussion on the diagrammatic expression. 

\smallskip

We prove the triangulation by induction on both $k$ and $n$, where the base cases are $k=1$, i.e., the cyclic polytope cases. Our strategy relies on Marsh-Rietsch's parametrization \cite{MR} of cells in $\Gr_{k,n}^{\ge 0}$, as well as the cyclic symmetry $\sigma_{k,n}$ of $\Gr_{k,n}^{\ge 0}$, to perform reductions on both $k$ and $n$. Our proof of the disjointness is inspired by \cite{AHTT}. 

\subsection{}

We then use the algorithm in Theorem~A to find an explicit (non-recursive) collection of cells that triangulates the amplituhedron $\CA_{n,k,2}$. This collection matches  the collection of cells considered in \cite{KWZ, AHTT}.

\begin{mainB}[Theorem~\ref{thm:m=2}]
The cells
$$
\{\CP^J_{k, n}(a_1, a_2, \ldots, a_k)  \subset \Gr_{k,n}^{\ge 0} \vert  1<a_1<a_2<\ldots<a_k \le n-1\}
$$
 gives a triangulation of the amplituhedron $\mathcal{A}_{n,k,2} = \mathcal{A}_{n,k,2}(Z)$ for any initial data $Z \in \text{Mat}_{k+2, n}^{>0}$.

\end{mainB}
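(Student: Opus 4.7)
My plan is to derive Theorem B from Theorem A by induction on $n$, with a secondary induction on $k$. The base case $k=1$ is the cyclic polygon $\CA_{n,1,2}$, for which the collection $\{\CP^J_{1,n}(a_1):1<a_1\le n-1\}$ should give the fan triangulation from the vertex $n$, a special case of Rambau's classical result \cite{Ra97}; the other degenerate case $k+2=n$ is immediate from $\CA_{n,k,2}\cong\Gr_{k,n}^{\ge 0}$.

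For the inductive step, I would split the claimed collection into the subfamily with $a_k\le n-2$ and the subfamily with $a_k=n-1$, and match these against the two summands in Theorem A. The matching of the first subfamily with $\iota_{\text{pre}}(\CC_{n-1,k,2})$ should be essentially formal: $\iota_{\text{pre}}$ corresponds to adjoining a zero column on the right, so under the Marsh-Rietsch parametrization a cell of $\Gr_{k,n-1}^{\ge 0}$ with label $(a_1,\ldots,a_k)$ pushes forward to the cell of $\Gr_{k,n}^{\ge 0}$ with the same label, yielding a bijection with cells for which $a_k\le n-2$. The counting identity $\binom{n-3}{k}+\binom{n-3}{k-1}=\binom{n-2}{k}$ is consistent with this split.

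The matching of the second subfamily $\{a_k=n-1\}$ with $\sigma_{k,n}^{-2}\cdot U^-(s_2 s_1)\cdot \iota_{\text{inc}}(\CC_{n-1,k-1,2})$ is where the real work lies, and will be the \textbf{main obstacle}. One must show that starting from a cell $\CP^{J'}_{k-1,n-1}(a_1',\ldots,a_{k-1}')$, the composite operation $\sigma_{k,n}^{-2}\cdot U^-(s_2 s_1)\cdot \iota_{\text{inc}}$ produces precisely the cell $\CP^J_{k,n}(a_1,\ldots,a_{k-1},n-1)$ for an appropriate shift of indices. In Marsh-Rietsch terms, each cell $\CP^J(a_1,\ldots,a_k)$ is the image of an explicit positive distinguished subexpression for a Weyl group element, so the identification reduces to tracking, at the level of such subexpressions, how left-multiplication by $U^-(s_2 s_1)$ and the cyclic rotation $\sigma_{k,n}^{-2}$ act on the underlying sequence. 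The delicate point is verifying that their combined effect is exactly ``append a top index $n-1$ and cyclically relabel the remaining indices'', which should follow from an explicit manipulation of Chevalley generators in type $A$.

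Once both matchings are checked, the disjointness, injectivity, and surjectivity guaranteed by Theorem A directly yield Theorem B.
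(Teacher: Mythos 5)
Your reduction of Theorem~B to Theorem~A, your split of the claimed collection into the subfamilies $a_k\le n-2$ and $a_k=n-1$, and your identification of the second subfamily as the main obstacle are all correct and match the paper's strategy. However, there is a genuine gap precisely at that obstacle. You propose to feed the naive inductive family $\{\CP^J_{k-1,n-1}(a'_1,\ldots,a'_{k-1})\}$ into the recursion of Theorem~A and recover $\CP^J_{k,n}(a_1,\ldots,a_{k-1},n-1)$ by ``an appropriate shift of indices.'' This does not work: the composite $\sigma^{-2}_{k,n}\cdot U^-(s_2s_1)\cdot\iota_{\text{inc}}$ applied to these cells produces a different collection of cells --- still a valid triangulation, but not the set claimed in Theorem~B --- and no relabeling of the $a_i$'s fixes this. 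The discrepancy is visible already in the example of \S\ref{ex:1}: for $n=5$, $k=2$, applying the composite to $\{\CP^J_{1,4}(2),\CP^J_{1,4}(3)\}$ yields $\{32\dot 1 4\dot 3 2, \dot 3 2 1 \dot 4 3 2\}$, whereas the cells $\CP^J_{2,5}(a_1,4)$ of Theorem~B are $\{214\dot 3 2, 3\dot 2 1 4 \dot 3 2\}$; these differ even as sets of positroid cells.

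The missing idea is to exploit the freedom in Theorem~A: it accepts \emph{any} triangulation of $\CA_{n-1,k-1,2}$, so one is allowed to feed in the cyclically pre-twisted collection $\{\sigma^{-1}_{k-1,n-1}\cdot\CP^J_{k-1,n-1}(a_1,\ldots,a_{k-1})\}$, which triangulates $\CA_{n-1,k-1,2}$ by Corollary~\ref{cor:cyclic}. It is exactly this pre-twist that makes the composite $\sigma^{-2}_{k,n}\cdot U^-(s_2s_1)\cdot\iota_{\text{inc}}$ land precisely on the cells $\CP^J_{k,n}(a_1,\ldots,a_{k-1},n-1)$, and the verification then reduces to the ``direct computation'' you anticipate. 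Without invoking cyclic invariance of triangulations at this point, your argument proves the existence of \emph{some} explicit triangulation but not the particular one stated in Theorem~B.
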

\subsection{}The paper is organized as follows. We recall totally nonnegative Grassmannians and their cellular decomposition in \S\ref{1-pre}.  We state the main result in \S\ref{2-main} with examples. We prove the main theorem in \S\ref{sec:pre} and \S\ref{sec:proof}. We then compute an explicit collection of cells for the triangulation of $\CA_{n,k,2}$ in \S\ref{4-explicit} using the main theorem. We translate our result into diagrams in \S\ref{5-diag}.

{\bf Acknowledgment:} We thank Lauren Williams for helpful discussions. We thank George Lusztig and Thomas Lam for useful comments on the paper. HB is supported by a NUS start-up grant. XH is partially supported by NSF grant DMS-1801352 and a start-up grant of CUHK. The plabic graphs in the paper are modified over the graphs from \cite{KWZ}.

\section{Totally nonnegative Grassmannians}\label{1-pre}

Throughout the paper, we identify the real algebraic variety with its $\BR$-points. In other words, we simply write $\GL_n$ for $\GL_n(\BR)$, etc. In this section, we recall the definition and properties of totally nonnegative Grassmannians. We start  with Postnikov's approach, and then Lusztig's theory of totally positivity, and then Marsh-Rietsch's parametrization of cells. 

\subsection{The Grassmannian and its totally nonnegative parts} \label{subsec:ele}
Let $k<n$ be positive integers. The (real) Grassmannian $\Gr_{k, n}$ is the space of all $k$-dimensional subspaces in $\BR^n$. 
Let $\Mat_{n, k}^\times$ be the set of (real) $n \times k$ matrices with rank $k$. We represent the elements in the Grassmannian in terms of matrices as follows. Note that we have a natural action of $\GL_k$ on $\Mat^\times_{n, k}$ by right multiplication and a natural map $\Mat_{n, k}^\times \to \Gr_{k, n}$ which sends a $n \times k$ matrix $M$ with rank $k$ to the $k$-dimensional subspace of $\BR^n$ spanned by the columns of $M$. It is easy to see that this map induces an isomorphism of real algebraic varieties 
$$
\Mat_{n, k}^\times / \GL_k \cong \Gr_{k, n}.
$$

Following Postnikov \cite{Pos}, we say that $M \in \Mat_{n, k}^\times$ is {\it totally positive} (resp. {\it totally nonnegative}) if all the maximal minors of $M$ are positive (resp. nonnegative). We denote by $\Mat_{n, k}^{>0}$ (resp. $\Mat_{n, k}^{\ge 0}$) the set of totally positive (resp. totally nonnegative) elements in $\Mat_{n, k}^\times$. 

Let $\GL_{k}^+$ be the group of $k\times k$-matrices with positive determinant. The totally positive Grassmannian $\Gr_{k, n}^{>0}$ and the totally nonnegative Grassmannian $\Gr_{k, n}^{\ge 0}$ are defined respectively as
\[
	\Gr_{k, n}^{>0} = \Mat_{n, k}^{> 0} / \GL^{+}_{k} \subset \Mat_{n, k}^{\times} / \GL_{k}, \quad  \Gr_{k, n}^{\ge 0} = \Mat_{n, k}^{\ge 0} / \GL^{+}_{k} \subset \Mat_{n, k}^{\times} / \GL_{k}.
\]

Let $[n] = \{ 1, 2, \dots, n\}$, and denote by $\begin{pmatrix} [n]\\ k \end{pmatrix}$ the set of $k$-element subsets of $[n]$. For any $I = \{i_1, i_2, \dots, i_k\} \in \begin{pmatrix} [n]\\ k \end{pmatrix}$, we denote by $\Delta_{I}(M)$ the maximal minor of $M \in \Mat^{\times}_{n, k}$ with respect to the $k$-rows $1\le i_1 < i_2< \cdots < i_{k} \le n$.
We shall often use notations $\Delta_{1, \ldots}(M) = \Delta_{1,i_2,\ldots, i_{k}}(M)$ and $\Delta_{\ldots, n}(M) =  \Delta_{i_1,i_2,\ldots, n}(M)$, etc..

A matroid $\mathcal{M}_x$ of $x \in \Gr_{k,n}$ is defined as 
\[
	\mathcal{M}_x = \Big\{ I \in \begin{pmatrix}	[n] \\ k\end{pmatrix} \Big\vert \Delta_{I}(x) \neq 0\Big\} \subset  \begin{pmatrix}	[n] \\ k\end{pmatrix}. 
\]	
We call $\mathcal{M}_x$ a positroid, if $x \in \Gr_{k,n}^{\ge 0}$. 

Let $\mathcal{M}$ be a positroid (of some $x \in \Gr_{k,n}^{\ge 0}$), we denote the positroid cell associated with $\mathcal{M}$ by 
\begin{equation}\label{eq:positroid}
	\mathcal{S}^{> 0}_{\mathcal{M}} = \{ x \in  \Gr_{k,n}^{\ge 0} \vert \mathcal{M}_x = \mathcal{M} \}.
\end{equation}
It is known (\cite[Theorem~3.5]{Pos}) that $\mathcal{S}^{> 0}_{\mathcal{M}} \cong \mathbb{R}^d_{\ge 0}$ for some $d$. We then have the stratification of $\Gr_{k,n}^{\ge 0}$ by positroid cells 
\[
	\Gr_{k,n}^{\ge 0} = \bigsqcup_{\mathcal{M}} \mathcal{S}^{> 0}_{\mathcal{M}}, \quad \text{ disjoint union over all positroids }\mathcal{M}.
\] 
Moreover, the closure of a positroid cell is a union of positroid cells. 

\subsection{The flag variety and its connection with the Grassmannian}\label{subsec:Lu}
Let $G=\GL_n$ and $I=\{1, 2, \ldots, n-1\}$ be the index set of the simple roots of $G$. 

Let $B^+ \subset G$ be the subgroup consisting of upper triangular matrices and $B^- \subset G$ be the subgroup consisting of lower triangular matrices. Both $B^+$ and $B^-$ are Borel subgroups of $G$ and they are conjugate in $G$. Let $\CB \cong G/B^+$ be the flag variety of $G$, i.e., the variety of Borel subgroups of $G$. We may identify $\CB$ as 
$$
\CB=\{0=V_0 \subset V_1 \subset \cdots \subset V_n=\BR^n \vert \dim V_i=i\}.
$$
 For any $g \in G$ and $B \in \CB$, we write $g \cdot B$ instead of $g B g^{-1}$. 

Let $W=S_n$ be the Weyl group of $G$. Let $\ell$ be the length function on $W$ and $\le$ be the Bruhat order on $W$. For $i \in I$, set $s_i=(i, i+1)$ be the $2$-cycle associated to $i$. The set $\{s_i\}_{i \in I}$ is the set of simple reflections of $W$. 

It is known that the $G$-orbits on $\CB \times \CB$ (for the diagonal action) are in natural bijection with $W$. For $B, B'$ in $\CB$, there is a unique $w \in W$, such that $(B, B')$ is in the $G$-orbit of $(B^+, \dot w \cdot B^+)$, where $\dot w$ is a representative of $w$ in $G$ (such orbit is independent of the choice of the representative $\dot w$). We write $\pos(B, B')=w$. For any $w, w' \in W$, define
$$\CR_{w, w'} =\{B \in \CB \mid \pos(B^+, B)=w', \pos(B^-, B)=w_0
w \}.$$

It is known that $\CR_{w, w'}$ is nonempty if and only if $w \le w'$ in the Bruhat order in $W$. We have $$ \CB=\bigsqcup_{w \le w'} \CR_{w, w'}.$$

Let $k<n$ be a positive integer and $J=I-\{k\}$. Let $P_J \subset G$ be the subgroup of $G$ consisting of block upper triangular matrices of block size $k$ and $n-k$. This is the standard parabolic subgroup of $G$ corresponding to the subset $J$ of $I$. Set $\CP_J=G/P_J$. We identify $\CP_J$ with $\Gr_{k, n}$ as $G$-homogeneous spaces by identifying the base point
\begin{align*}
		P_J \mapsto  \begin{bmatrix}
						1 &  \cdots & 0 \\
						\vdots &   \ddots &\vdots\\ 
						0 &  \cdots & 1\\
						\vdots   & \ddots &\vdots\\ 
						0 &  \cdots & 0\\
					\end{bmatrix} \in \Mat^\times_{k,n}.
\end{align*}

The embedding $B^+ \subset P$ induces a natural projection map $\pi_J: \CB \to \Gr_{k, n}$, which sends the flag $0=V_0 \subset V_1 \subset \cdots \subset V_n=\BR^n$ to the $k$-dimensional subspace $V_i$ of $\BR^n$. 

Let $W_J=S_k \times S_{n-k}$ be the Weyl group of $P$. Let $W^J \subset W$ be the set of minimal length representatives in their cosets $W/W_J$. For any $w \in W$ and $w' \in W^J$ with $w \le w'$, we have the projective Richardson variety $\CP^J_{w, w'}=\pi_J(\CR_{w, w'})$. We have $$\CP^J=\sqcup_{w \in W, w' \in W^J; w \le w'} \CP^J_{w, w'}.$$

\subsection{Lusztig's theory of totally positivity} Now we recall Lusztig's theory of totally positivity on reductive groups and their flag varieties. We will only deal with the group $\GL_n$ instead of any reductive group here. The readers interested in the general theory may refer to Lusztig's papers \cite{Lu94}. 

For $1 \le i \le n-1$ and $a \in \mathbb{R}$, we set 
\[x_i(a) = \kbordermatrix{
   	&    		&		& i          	& i+1 	     & 			&	\\
   	& 1 		& \cdots 	& 0		& 0	   & \cdots 	& 0	\\
   	& \vdots	&  \ddots 	& \vdots	& \vdots		     &		\ddots	&\vdots\\
 i 	&0 		& \cdots 	& 1 		& a 		     & \cdots 	& 0\\
 i+1  	&0		& \cdots 	&0 		& 1		     &	\cdots	&0 \\	
 	&\vdots 	&\ddots	&\vdots	&\vdots	     & \ddots 	&\vdots\\
	&0	&\cdots	&0	&0	     & \cdots 	&0
},\quad
y_i(a) = \kbordermatrix{
   	&    		&		& i          	& i+1 	     & 			&	\\
   	& 1 		& \cdots 	& 0		& 0	   & \cdots 	& 0	\\
   	& \vdots	&  \ddots 	& \vdots	& \vdots		     &		\ddots	&\vdots\\
 i 	&0 		& \cdots 	& 1 		& 0 		     & \cdots 	& 0\\
 i+1  	&0		& \cdots 	&a 		& 1		     &	\cdots	&0 \\	
 	&\vdots 	&\ddots	&\vdots	&\vdots	     & \ddots 	&\vdots\\
	&0	&\cdots	&0	&0	     & \cdots 	&0
}.
\]

Let $U^+$ be the unipotent radical of $B^+$, i.e., the subgroup of $B^+$ consisting of all upper triangular matrices with diagonal entries $1$. Let $U^-$ be the unipotent radical of $B^-$, i.e., the subgroup of $B^-$ consisting of all lower triangular matrices with diagonal entries $1$.

For any $w \in W$ with a reduced expression $w = s_{i_1} s_{i_2} \cdots s_{i_N}$, we define 
\begin{gather*}
U^+(w)=\{x_{i_1}(a_1) \cdots x_{i_N}(a_N) \vert a_j>0 \text{ for all } 1 \le j \le N\}; \\
U^-(w)=\{y_{i_1}(a_1) \cdots y_{i_N}(a_N) \vert a_j>0 \text{ for all } 1 \le j \le N\}.
\end{gather*}

It is known \cite[Proposition~2.7]{Lu94} that the sets $U^+(w)$ and $U^-(w)$ are independent of the reduced expression of $w$.
Let $w_0$ be the longest element in the Weyl group $W$. We write $U^+_{>0}= U^+(w_0)$ and $U^-_{>0}= U^-(w_0)$.

It is proved in \cite[Theorem~8.7]{Lu94} that the subsets $U^+_{>0} \cdot B^-$ and $U^-_{>0} \cdot B^+$ of $\CB$ are the same. This is the totally positive part $\CB_{>0}$ of the flag variety $\CB$. The totally nonnegative part $\CB_{\ge 0}$ of $\CB$ is defined to be the closure of $\CB_{>0}$ in $\CB$. For any $w, w' \in W$ with $w \le w'$, we set $\CB_{w, w'; >0}=\CB_{w, w'} \cap \CB_{\ge 0}$. Then 
$$
\CB_{\ge 0}=\sqcup_{w, w' \in W; w \le w'} \CB_{w, w'; >0}.
$$

Let $k<n$ be a positive integer and $J=I-\{k\}$. We have $\Gr_{k, n}^{\ge 0}=\pi_J(\CB_{\ge 0})$ and $\Gr_{k, n}^{>0}=\pi_J(\CB_{>0})$. This definition of the totally nonnegative Grassmannian (resp. totally positive Grassmannian) agrees with the definition in \S\ref{subsec:ele} (cf. \cite[Theorem~3.6]{Lam14}, \cite[Theorem~7.8]{TL13}, \cite[Theorem~0.5]{Lu19}).

For any $w \in W$ and $w' \in W^J$ with $w \le w'$, we set $\CP^J_{w, w'; >0}=\pi_J(\CB_{w, w'; >0})$. By \cite[Section~7]{Rie2}, 
$$\Gr_{k, n}^{\ge 0}=\sqcup_{w \in W, w' \in W^J; w \le w'} \CP^J_{w, w'; >0}.
$$ 
Moreover, for any $w \in W$ and $w' \in W^J$ with $w \le w'$, the map $\pi_J$ induces an isomorphism of real semi-algebraic varieties $\CB_{w, w'; >0} \cong \CP^J_{w, w'; >0}$.

\subsection{The Marsh-Rietsch parametrization} For $i \in I$, set 
$$
\dot{s_i}=x_i(-1)y_i(1)x_i(-1)= \kbordermatrix{
   	&    		&		& i          	& i+1 	     & 			&	\\
   	& 1 		& \cdots 	& 0		& 0	   	     & \cdots 	& 0	\\
   	& \vdots	&  \ddots 	& \vdots	& \vdots	     &\ddots	&\vdots\\
 i 	&0 		& \cdots 	& 0 		& -1 		     & \cdots 	& 0\\
 i+1  	&0		& \cdots 	&1 		& 0		     &	\cdots	&0 \\	
 	&\vdots 	&\ddots	&\vdots	&\vdots	     & \ddots 	&\vdots\\
	&0		&\cdots	&0		&0	     	     & \cdots 	&1
}.
$$ 

Let $w \le w'$. We fix a {\it reduced expression} $\underline w'$ of $w'$ with factors $(s_{i_1}, s_{i_2}, \ldots, s_{i_l})$, i.e. $w'=s_{i_1} s_{i_2} \cdots s_{i_l}$ and $l=\ell(w)$. A {\it subexpression} for $w$ in $\underline w'$ is an expression $\underline w=(w_{\{0\}}, \ldots, w_{\{l\}})$, where $w_{\{0\}}=1$, $w_{\{l\}}=w$ and for any $j$, $w_{\{j\}} \in \{w_{\{j-1\}}, w_{\{j-1\}} s_{i_j}\}$. The subexpression $\underline w$ is called {\it positive} if $w_{\{j-1\}}<w_{\{j-1\}} s_{i_j}$ for all $j$. It is known \cite[Lemma 3.5]{MR} that the positive subexpression for $w$ in $\underline w'$ exists and is unique. We denote by $\underline w_+$ the unique positive subexpression for $w$ in $\underline w'$. Set 
$$
G^{>0}_{\underline w_+, \underline w'}=\{g_1 \cdots g_l \vert g_j \in y_{i_j}(\BR_{>0}) \text{ if } w_{j_1}=w_j, g_j=\dot s_{i_j} \text{ if } w_{j-1}<w_j\}.
$$
It is proved in \cite[Theorem 11.3]{MR} that 

\begin{theorem}
The map $g \mapsto g \cdot B^+$ induces an isomorphism of real semi-algebraic varieties $$G^{>0}_{\underline w_+, \underline w'} \cong \CR_{w, w'; >0}.$$
\end{theorem}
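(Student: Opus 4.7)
The plan is to proceed by induction on $\ell(w')$, exploiting the recursive structure of the positive subexpression $\underline w_+$ in $\underline w'$. The base case $w' = 1$ forces $w = 1$, and both sides of the asserted isomorphism reduce to the singleton $\{B^+\}$.

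For the inductive step, I would write $\underline w' = (s_{i_1}, s_{i_2}, \ldots, s_{i_l})$ and set $\underline{w''} = (s_{i_2}, \ldots, s_{i_l})$, a reduced expression for $w'' := s_{i_1} w' < w'$. The positive subexpression $\underline w_+$ in $\underline w'$ restricts naturally to a positive subexpression $\underline v_+$ for some $v \le w''$ in $\underline{w''}$, and I would split the analysis into the two cases dictated by whether the first factor $g_1$ is $\dot s_{i_1}$ (the case $w_{\{0\}} < w_{\{1\}}$) or $y_{i_1}(t_1)$ with $t_1>0$ (the case $w_{\{0\}} = w_{\{1\}}$). Peeling off $g_1$ should yield a factorization $G^{>0}_{\underline w_+, \underline w'} \cong F \times G^{>0}_{\underline v_+, \underline{w''}}$, where $F$ is either a point or $\BR_{>0}$. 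On the Richardson side, left multiplication by $g_1^{-1}$ should intertwine this decomposition with a corresponding fibration of $\CR_{w, w'; >0}$ over $\CR_{v, w''; >0}$, and I would combine this with the inductive hypothesis to conclude.

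To make the intertwining work, I would rely on two standard compatibilities: first, conjugation by $\dot s_{i_1}$ relates the pair of Bruhat positions $(w, w')$ with $(s_{i_1} w, s_{i_1} w')$ when $s_{i_1} w < w$, giving a biregular isomorphism between the relevant Richardson strata; second, left multiplication by $y_{i_1}(t_1)$ for $t_1 > 0$ preserves $\CB_{\ge 0}$ and shifts Bruhat positions in a controlled way because $y_{i_1}(t_1) \in U^-_{>0}$. Together with Lusztig's characterization $\CB_{\ge 0} = \overline{U^-_{>0} \cdot B^+}$ and the uniqueness of the positive subexpression \cite[Lemma 3.5]{MR}, these are the workhorses that will deliver both well-definedness and the recursive structure on the parameter side.

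The hardest step will be surjectivity together with the positivity of all parameters $t_j$. Given an arbitrary $B \in \CR_{w, w'; >0}$, I would reconstruct the factorization $(g_1, \ldots, g_l)$ by a greedy algorithm that, at each step, reads off either $\dot s_{i_j}$ or some $y_{i_j}(t_j)$ from the Bruhat data relative to $B^+$ and $B^-$; verifying $t_j > 0$ (rather than merely $t_j \neq 0$) requires the full strength of total positivity theory, and this is precisely where the closure characterization of $\CB_{\ge 0}$ must be invoked to rule out sign changes. Injectivity will then follow from the uniqueness of the Bruhat decomposition at each step, and continuity in both directions upgrades the bijection to an isomorphism of real semi-algebraic varieties, since all maps involved are rational with nonvanishing denominators on the relevant open subsets.
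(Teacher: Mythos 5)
This is a cited result (\cite[Theorem~11.3]{MR}): the paper does not supply a proof, so the comparison must be against Marsh--Rietsch's own argument, which is built on the Deodhar decomposition of the Richardson variety into strata indexed by distinguished subexpressions, together with explicit positivity of certain rational functions (chamber minors) on $\CR_{w,w';>0}$. Your sketch attempts instead a direct ``peel off the first factor'' induction, and while the high-level shape is reasonable, there are concrete gaps that would need more than total positivity generalities to close.

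First, the claimed intertwining fails in the direction you need it. You observe correctly that left multiplication by $y_{i_1}(t_1)$, $t_1>0$, preserves $\CB_{\ge 0}$; but the recursion requires the \emph{inverse} operation $B \mapsto y_{i_1}(t_1)^{-1}\cdot B$, and $y_{i_1}(-t_1)\notin U^-_{>0}$, so this does not preserve $\CB_{\ge 0}$. Hence it is not automatic that peeling lands you in $\CR_{v,w'';\ge 0}$, and in fact establishing precisely this is the crux of Marsh--Rietsch's Section~11. Second, in the case $g_1 = \dot s_{i_1}$, the Bruhat position of $\dot s_{i_1}^{-1}\cdot B$ relative to $B^-$ is only constrained to lie in the two-element set $\{w_0 w,\, w_0 s_{i_1} w\}$ (since $\pos(B^-, \dot s_{i_1}\cdot B^-) = s_{i_1^*}$, not the identity), so the target Richardson stratum is not pinned down by the ``standard compatibilities'' you invoke. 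Third, and most seriously, the greedy reconstruction step is underdetermined: Bruhat position data are Weyl group valued and cannot by themselves produce the real parameters $t_j$, let alone certify $t_j>0$; in Marsh--Rietsch the $t_j$ are recovered as explicit ratios of generalized minors, and their positivity on $\CR_{w,w';>0}$ is what requires the ``full strength of total positivity theory'' you gesture at. Without identifying these rational functions, the argument that $t_j\neq 0$ cannot be upgraded to $t_j>0$, and the closure description $\CB_{\ge 0}=\overline{U^-_{>0}\cdot B^+}$ alone does not rule out sign changes of individual factor coordinates under degeneration. A repair would either reintroduce the Deodhar framework to get a clean parametrization of the ambient stratum (so that the positive locus can be cut out) or make the minor/chamber-ansatz computation explicit.
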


If moreover $w' \in W^J$, then the map $g \mapsto g \cdot P_J$ induces an isomorphism of the real semi-algebraic groups $$G^{>0}_{\underline w_+, \underline w'} \cong \CP^J_{w, w'; >0}.$$
In particular, $\CP^J_{w, w';>0}$ is a semi-algebraic cell of dimension $\ell(w')-\ell(w)$. 

\smallskip

Finally, we give an explicit description of the elements in $W^J$. For $0 \le a \le n-1$ and $1 \le b \le n-1$, we write $$s_{[a, b]}=\begin{cases} s_a s_{a-1} \cdots s_b, & \text{ if } a \ge b; \\ 1, & \text{ otherwise}. \end{cases}$$ The elements in $W^J$ are of the form $(s_{[a_1, 1]}) (s_{[a_2, 2]}) \cdots (s_{[a_k, k]})$, where $0 \le a_1<a_2<\ldots<a_k$. Note that the expression given above are reduced expressions, which we fix in this paper. 

\subsection{Translation between different parametrizations}\label{sub:ex}
Note that Postnikov's cells and Rietsch's cells are defined in very different ways. It is a rather nontrivial fact that these cells coincide. This is showed by Rietsch (unpublished) in 2009, by Talaska and Williams \cite{TL13}, by Lam \cite{Lam14}, and most recently by Lusztig \cite{Lu19} via different methods. In this subsection, we explain by example the translation between Rietsch's cell decomposition of $\Gr_{k,n}^{\ge 0}$ and the Postnikov's positroid cells decomposition. More details can be found in \cite{Pos}.

\begin{example}
Let us consider the cell $\CP^J_{s_1, s_1s_3s_2;>0} \subset \Gr_{2,4}^{\ge 0}$. In terms of the matrices, we have 
\begin{align*}
	\CP^J_{s_1, s_1s_3s_2;>0} &= \left\{ \dot{s}_1 y_3(a_3) y_2(a_2) \cdot 
		\begin{bmatrix}
			1& 0 \\
			0 & 1\\
			0& 0\\
			0& 0
		\end{bmatrix} \middle \vert  a_2>0, a_3 >0.
		 \right \} \\
		 &=  \left\{
		\begin{bmatrix}
			0& -1 \\
			1 & 0\\
			0& a_2\\
			0& a_2a_3
		\end{bmatrix} \middle \vert  a_2>0, a_3 >0. \right \}
\end{align*}
Hence the positroid description of the cell can be obtained as ($M \in \Mat^{\ge 0}_{4,2}$)
\[
	\CP^J_{s_1, s_1s_3s_2;>0} = \left \{ \GL^{>0}_{n} \cdot M \in \Gr_{2,4}^{\ge 0} \middle \vert 
		\begin{array}{l}
			\Delta_{12}(M)>0, \Delta_{13}(M) =0, \Delta_{14}(M)=0, \\
			\Delta_{23}(M)>0, \Delta_{24}(M)  >0, \Delta_{32}(M)=0.
			\end{array}
				\right\}
\]
\end{example}

\subsection{Embeddings}\label{sub:emb}
We define $k$-preserving embedding of totally nonnegative Grassmannians
\begin{gather*}
\iota_{\text{pre}} : \Gr_{k, n-1}^{\ge 0} \longrightarrow \Gr_{k, n}^{\ge 0}, \qquad 
A \in \text{Mat}^{\ge 0}_{n-1 \times k} \mapsto \left[\begin{array}{c}
A \\
\hline
0 \cdots 0 
\end{array}\right] \in \text{Mat}^{\ge 0}_{n, k},
\end{gather*}

and $k$-increasing embedding of  totally nonnegative Grassmannians

\begin{gather*}
\iota_{\text{inc}}  :\Gr_{k-1, n-1}^{\ge 0} \longrightarrow \Gr_{k, n}^{\ge 0},\qquad 
A \in \text{Mat}^{\ge 0}_{n-1 \times k-1} \mapsto 
\left[\begin{array}{c|c}
\begin{matrix}1\end{matrix}   &\begin{matrix} 0 &\cdots &0\end{matrix} \\
\hline
\begin{matrix} 0 \\ \vdots \\ 0\end{matrix}  & {\Large{A}}
\end{array}\right] \in \text{Mat}^{\ge 0}_{n, k}.
\end{gather*}
It is clear that the image of a cell is still a cell under both maps.
In particular, we have 
\begin{align*}
\CP^J_{k,n-1} &:= \CP^J_{1, s_{[n-k-1, 1]} s_{[n-k, 2]} \cdots s_{[n-2, k]}; >0} = \iota_{\text{pre}} (\Gr_{k,n-1}^{> 0}),\\
 \CP^J_{k-1,n-1} &:=\CP^J_{1,  s_{[n-k+1, 2]} \cdots s_{[n-1, k]}; >0} = \iota_{\text{inc}}(\Gr_{k-1,n-1}^{> 0} ).
\end{align*}
We remark that these are the $k$-preserving and $k$-increasing operations considered by physicists \cite{AHBC+16}. 

\subsection{Cyclic symmetry}

Let 
\[
\sigma_{k,n} = \begin{bmatrix}
	0 &0 &\cdots &0 &(-1)^{k-1}\\
	1 &0 &\cdots &0 &0\\
	0 &1 &\cdots &0 &0\\
	\vdots& \vdots & \ddots & \vdots& \vdots\\
	0 &0 &\cdots &1 &0
\end{bmatrix} \in \text{Mat}_{n \times n}.
\]

For any $M \in \Mat_{n, k}$, the matrix $\sigma_{k,n}  \cdot M$ is obtained by cyclically shifting the rows of $M$ and multiplying the first row by $(-1)^{k-1}$. So we have 
\begin{equation}\label{eq:cyclic}
\sigma_{k,n} \cdot M \in\Mat^{\ge 0}_{n, k}, \quad \text{ if } M \in\Mat^{\ge 0}_{n, k}.
\end{equation}

It follows immediately from \eqref{eq:positroid} that $\sigma_{k,n} \cdot \mathcal{S}^{> 0}_{\mathcal{M}}$ is also a positroid cell of $\Gr_{k,n}^{\ge 0}$ if $\mathcal{S}^{> 0}_{\mathcal{M}}$ is. The following proposition will be used in Proposition~\ref{prop:Ra97}.

\begin{proposition}\label{prop:cyc}
Let $n-k\ge 2$. We have $$\sigma^{-2}_{k,n} \cdot U^-(s_2s_1) \cdot \CP^J_{k-1,n-1} = \CP^J_{s_{[n-2, k+1]}, s_{[n-k,1]}\cdots s_{[n-2, k-1]}s_{[n-1, k]}; >0}.$$
\end{proposition}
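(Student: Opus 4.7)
The plan is to prove the cell equality by producing matching Marsh-Rietsch presentations for both sides.

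First, both sides have dimension $(k-1)(n-k)+2$: the source $\iota_{\text{inc}}(\Gr_{k-1,n-1}^{>0}) = \CP^J_{1,w';>0}$ with $w' = s_{[n-k+1,2]}\cdots s_{[n-1,k]}$ has dimension $(k-1)(n-k)$, and multiplying by the $2$-parameter $U^-(s_2s_1)$ gives the LHS dimension at most $(k-1)(n-k)+2$; this matches the RHS dimension $\ell(w'')-\ell(s_{[n-2,k+1]}) = k(n-k)-(n-k-2)$. Writing the fixed reduced expression $\underline{w''} = C_1\cdots C_k$ with chunks $C_i = (s_{n-k+i-1},\ldots,s_i)$, I next determine the unique positive subexpression $(\underline{s_{[n-2,k+1]}})_+$ in $\underline{w''}$. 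Computing the right descents of $s_{[n-2,k+1]}$ (namely $s_{k+1},\ldots,s_{n-2}$) and applying the right-to-left construction of positive subexpressions, this subexpression lives entirely within the last chunk $C_k$ at the positions of $s_{n-2},\ldots,s_{k+1}$. This yields the explicit presentation
\begin{equation*}
	\CP^J_{s_{[n-2,k+1]},w'';>0} = U^-(C_1)\cdots U^-(C_{k-1})\cdot y_{n-1}(\mathbb{R}_{>0})\cdot \dot s_{n-2}\cdots \dot s_{k+1}\cdot y_k(\mathbb{R}_{>0})\cdot P_J/P_J.
\end{equation*}

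For the LHS, the key tools are the commutation identities (verified by direct matrix calculation from the explicit form of $\sigma_{k,n}$)
\begin{align*}
	\sigma_{k,n}^{-2}\cdot y_i(t) &= y_{i-2}(t)\cdot \sigma_{k,n}^{-2} \quad (i\geq 3),\\
	\sigma_{k,n}^{-2}\cdot y_1(t) &= y_{n-1}(t)\cdot \sigma_{k,n}^{-2},\\
	\sigma_{k,n}^{-2}\cdot y_2(t) &= \bigl(I+t(-1)^{k-1}E_{1,n}\bigr)\cdot \sigma_{k,n}^{-2},
\end{align*}
where the last boundary factor $I+t(-1)^{k-1}E_{1,n}$ lies in $P_J$ (position $(1,n)$ is in the upper $k\times(n-k)$ block). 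Checking $\ell(s_2s_1w') = 2+\ell(w')$ is direct from the Grassmannian-permutation form of $w'$ (which sends $1\to 1$, $2\to n-k+2$, etc.), so the concatenation property gives $U^-(s_2 s_1)\cdot\iota_{\text{inc}}(\Gr_{k-1,n-1}^{>0}) = U^-(s_2 s_1 w')\cdot P_J/P_J$. Pushing $\sigma_{k,n}^{-2}$ through this product shifts every index by $-2$ and produces exactly two $P_J$-absorbable boundary corrections: one from the $y_2(c)$ in $U^-(s_2 s_1)$, and one from the unique $y_2$-factor appearing in $w'$ (at the end of the first chunk $s_{[n-k+1,2]}$).

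The main obstacle is the final matching: identifying precisely how the shifted expression, together with the residual $\sigma_{k,n}^{-2}\cdot P_J$ term and the absorbed boundary corrections, assembles into the target Marsh-Rietsch presentation above. Concretely, the shifted chunks $s_{[n-k+j,j+1]}$ of $w'$ (for $j=2,\ldots,k-1$) provide the chunks $C_1,\ldots,C_{k-2}$ of $\underline{w''}$, the shifted first chunk contributes an additional $U^-(s_{[n-k-1,1]})$ factor, and the $y_1(d)$ from $U^-(s_2s_1)$ becomes a $y_{n-1}(d)$ prefactor. The remaining factors $U^-(C_{k-1})\cdot \dot s_{n-2}\cdots \dot s_{k+1}\cdot y_k$ on the target side must then arise from an explicit Bruhat/LU-type decomposition of $\sigma_{k,n}^{-2}\cdot P_J$ in $\GL_n/P_J$ combined with the two absorbed upper-triangular corrections. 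Once this decomposition is carried out, the bijectivity of the parameter correspondence follows from the uniqueness of Marsh-Rietsch presentations, establishing the equality of cells.
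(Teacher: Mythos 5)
Your approach—pushing $\sigma_{k,n}^{-2}$ through the explicit Marsh--Rietsch factored parametrization by conjugation—is a genuinely different strategy from the paper's, but as written it has a real gap that you yourself flag as ``the main obstacle'': the final matching is never carried out, and I don't think it resolves as cleanly as you suggest. The trouble is with the two boundary factors $I+t(-1)^{k-1}E_{1,n}$. You call them ``$P_J$-absorbable,'' but after you push $\sigma_{k,n}^{-2}$ all the way to the right, these factors sit in the \emph{middle} of the product, interleaved with the shifted $y$-factors; they are not adjacent to the residual $\sigma_{k,n}^{-2}\cdot P_J/P_J$. To absorb them into that coset you would need them to lie in the stabilizer $\sigma_{k,n}^{-2}P_J\sigma_{k,n}^{2}$, but $\sigma_{k,n}^{2}(I+tE_{1,n})\sigma_{k,n}^{-2}=y_2(\pm t)$, which is lower unipotent and not in $P_J$. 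Moreover $I+tE_{1,n}$ fails to commute with $y_{n-1}$ (their commutator involves $E_{1,n-1}$), so sliding the boundary factor past the shifted word is nontrivial. Finally, the shifted first chunk gives $U^-(s_{[n-k-1,1]})$, which is one simple factor short of $C_1=s_{[n-k,1]}$, so the shifted product does not line up with the target chunks $U^-(C_1)\cdots U^-(C_{k-1})$ without further reorganization. None of these issues are addressed, so the argument is incomplete.

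The paper sidesteps all of this by working with positroid data rather than factored group elements: it first absorbs $U^-(s_2s_1)$ into the Richardson presentation to get the cell $\CP^J_{1,\, s_{[2,1]}s_{[n-k+1,2]}\cdots s_{[n-1,k]};>0}$, rewrites that cell in terms of which Pl\"ucker coordinates are positive versus zero, observes that $\sigma_{k,n}^{-2}$ acts on this description by a cyclic shift of index sets (by \eqref{eq:cyclic}), and then reads off the Bruhat-interval labels of the resulting positroid cell directly. This avoids any commutator bookkeeping or LU-type decomposition of $\sigma_{k,n}^{-2}P_J$. If you want to salvage your line of attack, the cleanest fix is to do exactly this translation to positroid/Pl\"ucker language at the step where you currently hit the wall; trying to finish purely inside the unipotent factorization is possible in principle but would require you to explicitly re-derive the positive subexpression structure from the rearranged product, which is considerably more work than the claim warrants.
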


\begin{proof}
Since $n -k \ge 2$, we have
\[
	U^-(s_2s_1) \cdot  \CP^J_{k-1,n-1} =   \CP^J_{1, s_{[2,1]}s_{[n-k+1,2]}\cdots s_{[n-1, k]};>0},
\]
and 
\begin{align*}
	 &\CP^J_{1, s_{[2,1]}s_{[n-k+1,2]}\cdots s_{[n-1, k]};>0} \\
	 =& \left\{ M \in \Mat^{\ge 0}_{{n, k}} / \GL_{k}^{+} \middle\vert \begin{array}{l} \Delta_{(1, \dots)}(M)>0, \Delta_{(2, \dots)}(M)>0,\\ \Delta_{(3, \dots)}(M) >0,  \Delta_{\text{otherwise}}(M)=0.\end{array} \right\}.
\end{align*}

Then we have 
\begin{align*}
	 &\sigma_{k,n}^{-2} \cdot \CP^J_{1, s_{[2,1]}s_{[n-k+1,2]}\cdots s_{[n-1, k]};>0} \\
	 =& \left\{ M \in \Mat^{\ge 0}_{n, k} / \GL_{k}^{+} \middle \vert \begin{array}{l} \Delta_{(\dots,n-1)}(M)>0, \Delta_{( \dots,n)}(M)>0,\\ \Delta_{(1, \dots)}(M) >0,  \Delta_{\text{otherwise}}(M)=0.\end{array} \right\}\\
	 = & \CP^J_{s_{[n-2, k+1]}, s_{[n-k,1]}\cdots s_{[n-2, k-1]}s_{[n-1, k]}; >0}.
\end{align*}
This finishes the proof.
\end{proof}

\section{Main result}\label{2-main}

\subsection{The amplituhedron} 

\begin{definition}\label{def:amp}
Let $Z \in \Mat^{> 0}_{k+m,n}$, with $m \ge 0$ and $k +m \le n$. Then we define 
\[
	\widetilde{Z} : \Gr_{k,n}^{\ge 0} \longrightarrow \Gr_{k,k+m},\quad 
		A \mapsto Z \cdot A.
\]
The {\it (tree) amplituhedron} $\mathcal{A}_{n,k,m}(Z)$ is defined to the image of  $\Gr_{k,n}^{\ge 0}$ under the map $\widetilde{Z}$. 
\end{definition}

Note that $\widetilde{Z}$ is a well-defined, $\GL_{k+m}$-equivariant and proper map.

\begin{definition}\label{def:tri}
Let $\{C_\a\}$ be a collection of cells in $\Gr_{k, n}^{\ge 0}$ with $C_\a=\dim \Gr_{k, k+m}$. We say that $\{C_\alpha \}$  gives a {\it triangulation} of the amplituhedron $\CA_{n, k, m}$ if for any initial data $Z \in \Mat_{k+m, n}^{>0}$, we have  

\begin{itemize}
\item Injectivity: the map $C_\a \mapsto Z \cdot C_\a$ is injective for any $\a$;

\item Disjointness: $Z \cdot C_\a \cap Z \cdot C_\b=\emptyset$ if $\a \neq \b$; 

\item Surjectivity: the union of the image $\cup_{\a} Z \cdot C_\a$ is an open dense subset of $\CA_{n, k, m}(Z)$. 
\end{itemize}
\end{definition}

\begin{remark}
One can also consider the weaker notation of the $Z$-triangulation, for a fixed initial data $Z \in \Mat_{k+m,n}^{> 0}$. However, it is expected the triangulation should be independent of the initial data (at least for even $m$).
\end{remark}

\subsection{The main theorem} 

\begin{theorem}\label{thm:main}
Let $\CC_{n-1, k, 2}$ (resp. $\CC_{n-1, k-1, 2}$) be a collection of cells in $\Gr_{k,n-1}^{\ge 0}$ (resp. $\Gr_{k-1,n-1}^{\ge 0}$) that triangulates $\CA_{n-1,k,2}$ (resp. $\CA_{n-1,k-1,2}$). Then 
\[
\CC_{n, k, 2} = \iota_{\text{pre}}( \CC_{n-1, k, 2} ) \cup \sigma_{k,n}^{-2} \cdot U^-(s_2 s_1)\cdot \iota_{\text{inc}} ( \CC_{n-1, k-1, 2})
\text{ triangulates } \CA_{n, k, 2}.
\]
Moreover, the union above is a disjoint union. 
\end{theorem}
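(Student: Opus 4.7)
The plan is to induct on $n$, invoking the induction hypothesis at both $(n-1, k, 2)$ and $(n-1, k-1, 2)$, with base cases $k=1$ (cyclic polytopes, triangulated by \cite{Ra97}) and $n=k+2$ (where $\widetilde{Z}$ is bijective). First I would verify disjointness of the union in the source $\Gr_{k,n}^{\ge 0}$ and match dimensions. Cells in $\iota_{\text{pre}}(\CC_{n-1,k,2})$ satisfy $\Delta_I(M)=0$ whenever $n\in I$ (the last row vanishes), while cells in $\sigma_{k,n}^{-2}\cdot U^-(s_2s_1)\cdot \iota_{\text{inc}}(\CC_{n-1,k-1,2})$ are contained in the closure of $\CP^J_{s_{[n-2,k+1]},\,s_{[n-k,1]}\cdots s_{[n-2,k-1]}s_{[n-1,k]};\,>0}$ by Proposition~\ref{prop:cyc}, on which $\Delta_{(\ldots,n)}(M)>0$; this forces disjointness of the two pieces. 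Dimensions match $\dim\Gr_{k,k+2}=2k$ since $\iota_{\text{pre}}$ and $\iota_{\text{inc}}$ preserve cell dimension and $U^-(s_2s_1)$ contributes two extra parameters upgrading $2(k-1)$ to $2k$.

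Next I would reduce each piece to an amplituhedron of smaller size so as to apply the induction hypothesis. For the first piece, if $A=\iota_{\text{pre}}(A')$ then $\widetilde{Z}(A)=Z'\cdot A'$ where $Z'=(Z_1,\ldots,Z_{n-1})\in\Mat_{k+2,n-1}^{>0}$ (total positivity is preserved under column deletion), so $\widetilde{Z}\circ\iota_{\text{pre}}=\widetilde{Z'}$; the induction hypothesis for $\CC_{n-1,k,2}$ then supplies injectivity, internal pairwise disjointness of images, and an open dense image in the sub-amplituhedron $\CA_{n-1,k,2}(Z')\subseteq\CA_{n,k,2}(Z)$. For the second piece, the cyclic equivariance $\widetilde{Z}(\sigma_{k,n}^{-2}\cdot M)=Z^{(2)}\cdot M$ with $Z^{(2)}:=Z\cdot\sigma_{k,n}^{-2}$ still totally positive reduces the problem to studying the amplituhedron map on $U^-(s_2s_1)\cdot\iota_{\text{inc}}(\CC_{n-1,k-1,2})$. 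Writing an element via Marsh-Rietsch as $y_2(a)y_1(b)\cdot\iota_{\text{inc}}(A')$ with $a,b>0$ and $A'\in\Gr_{k-1,n-1}^{\ge 0}$, I would perform row operations on $Z^{(2)}$ that cancel the transvections $y_2$ and $y_1$, exhibiting the map as $A'\mapsto Z'''\cdot A'$ for an auxiliary initial datum $Z'''\in\Mat_{k+1,n-1}^{>0}$ depending on $a,b$, and recover $(a,b)$ from explicit ratios of minors of the image; combined with the induction hypothesis for $\CC_{n-1,k-1,2}$, this yields injectivity and internal disjointness on the second piece.

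The hardest step will be proving disjointness of images across the two pieces inside $\CA_{n,k,2}(Z)$, together with global surjectivity. For cross-piece disjointness, I would adapt the argument of \cite{AHTT}: on the first piece, the Plücker-type invariant of $Z\cdot A$ pairing with the last two columns of $Z$ vanishes because the last row of $A$ is zero, whereas on the second piece Marsh-Rietsch positivity combined with $a,b>0$ and the total positivity of $Z^{(2)}$ forces this same invariant to be strictly positive, placing the two images in complementary loci of $\Gr_{k,k+2}$. For surjectivity, the reductions in Step~2 identify the image of the first piece as an open dense subset of the facet $\CA_{n-1,k,2}(Z')$ and the image of the second piece as an open dense subset of a complementary BCFW stratum; combining properness of $\widetilde{Z}$, the dimension equality $\dim\CA_{n,k,2}(Z)=2k$, and a boundary analysis showing that every generic point of $\CA_{n,k,2}(Z)$ is either of the form $Z'\cdot A'$ with $A'\in\Gr_{k,n-1}^{\ge 0}$ or arises from a BCFW-type configuration absorbed by the second piece, closes the induction.
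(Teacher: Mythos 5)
Your overall plan---double induction, treating the first family via the column-deletion reduction $\widetilde Z\circ\iota_{\mathrm{pre}}=\widetilde{Z'}$, and the second family via cyclic equivariance and cancelling the $y_2,y_1$ transvections---matches the paper's architecture (Proposition~\ref{prop:reduc}, Lemma~\ref{lem:inj}, Corollary~\ref{cor:cyclic}). However there is a concrete error in the cross-piece disjointness step, which is the technical heart of the argument.

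You propose distinguishing the two families by the invariant obtained by pairing $Z\cdot A$ with the \emph{last two} columns $Z_{n-1},Z_n$, claiming it vanishes on $\iota_{\mathrm{pre}}(\CC_{n-1,k,2})$ because the last row of $A$ is zero. This is false. Expanding by multilinearity,
\[
\det[ZA,\,Z_{n-1},\,Z_n]\;=\;\sum_{I}\Delta_I(A)\,\det[Z_I,\,Z_{n-1},\,Z_n],
\]
and for $A\in\iota_{\mathrm{pre}}(\CC_{n-1,k,2})$ the sum ranges over $I\subset[n-1]$. The terms with $n-1\in I$ vanish by column repetition, but every term with $I\subset[n-2]$ contributes $\Delta_I(A)\cdot\det[Z_I,Z_{n-1},Z_n]$, where the second factor is a \emph{positive} maximal minor of $Z$ (columns already in increasing order). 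On the top cell $\CP^J_{k,n-1}$ all such $\Delta_I(A)$ are positive, so the invariant is strictly positive, not zero. One checks similarly that it is also nonnegative on the second family, so the two images do not land in complementary loci with respect to this functional. What the paper uses instead is $\widetilde{[1,n-1]}$, i.e.\ $\det[ZA,\,Z_1,\,Z_{n-1}]$: on the first family the sorting of $Z_1$ to the front contributes $(-1)^k$ and the remaining minor of $Z$ is positive, giving sign $(-1)^k$; on the second family the surviving index sets satisfy $n\in I$, $1,n-1\notin I$, and sorting gives sign $(-1)^{k-1}$. The invariant is a priori only nonnegative times the appropriate sign, and it is the openness of $Z\cdot C$ (Corollary~\ref{cor:open}, via invariance of domain) that rules out the value $0$ on each cell. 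Your plan is missing both the correct choice of columns and this openness step.

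The surjectivity sketch (``boundary analysis showing that every generic point of $\CA_{n,k,2}(Z)$ is either of the form $Z'\cdot A'$ or arises from a BCFW-type configuration'') is too vague to constitute a proof. The paper instead proves surjectivity purely algebraically: it factors the top cell as $\CP^J_{1,\,s_{[n-k,1]}\cdots s_{[n-1,k]};>0}=U^-(s_{[n-k,1]}\cdots s_{[n-2,k-1]})\cdot\CP^J_{1,\,s_{[n-1,k]};>0}$, applies the $k=1$ decomposition (Proposition~\ref{prop:Ra97}, via Rambau) to $\CP^J_{1,\,s_{[n-1,k]};>0}$ under the $\iota_{\mathrm{inc}}^k$ embedding, and then propagates the resulting covering through multiplication by $U^-$ using Lemma~\ref{lem:Zg}. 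You would need to supply something of comparable precision.

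Finally, your argument that the union is disjoint already at the source is also shaky: being contained in the \emph{closure} of $\CP^J_{s_{[n-2,k+1]},\,s_{[n-k,1]}\cdots s_{[n-1,k]};>0}$ does not force $\Delta_{(\ldots,n)}>0$ on cells of the second family; boundary cells can have that minor vanish. This point is subsidiary---the source-level disjointness follows anyway from injectivity plus image disjointness---but as written the argument does not close.
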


\begin{remark}
$\Gr_{0,n}^{\ge 0}$ is a point, and $\iota_{\text{inc}} ( \CC_{n-1, 0, 2}) = \CP^J_{1,1;>0}$ by convention.
\end{remark}

Our collection of cells is constructed recursively in a similar way as the BCFW recursion for $m=4$.  We also provide a diagrammatic reformulation in \S\ref{5-diag} of our main theorem following the diagrammatic BCFW-type recursion. We remark that our recursion is motivated by the reformulation of the BCFW recursion in \cite{BH15}. The proof of the main theorem shall be given in \S\ref{sec:proof}. 

\subsection{The numerical result}
The cardinality of the set $\CC_{n,k,2} $ was conjectured in \cite[Conjecture~8.1]{KWZ}, which now follows from the main theorem. 

\begin{proposition}
We have $\vert  \CC_{n,k,2} \vert = \begin{pmatrix}n-2 \\ k\end{pmatrix}$. 
\end{proposition}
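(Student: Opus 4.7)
The plan is a direct induction on $n$ leveraging the disjoint union decomposition of Theorem~\ref{thm:main}. The first step is to observe that both $\iota_{\text{pre}}$ and $\sigma_{k,n}^{-2}\cdot U^-(s_2s_1)\cdot \iota_{\text{inc}}$ send distinct cells to distinct cells: the former is obvious from the matrix description in \S\ref{sub:emb}, and the latter follows by applying Proposition~\ref{prop:cyc} to each individual cell, which identifies the image as a single Rietsch cell parametrized by the original data. Combined with the disjointness asserted in Theorem~\ref{thm:main}, this yields the recurrence
\[
|\CC_{n,k,2}| \;=\; |\CC_{n-1,k,2}| + |\CC_{n-1,k-1,2}|.
\]

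Next I would dispose of the base cases. When $k=0$, the nonnegative Grassmannian $\Gr_{0,n}^{\ge 0}$ is a point, and by the convention stated after Theorem~\ref{thm:main} we have $\CC_{n,0,2} = \{\CP^J_{1,1;>0}\}$, so $|\CC_{n,0,2}| = 1 = \binom{n-2}{0}$. At the minimal allowed value $n=k+2$, the first summand in the recurrence is empty, since the existence of $\CA_{n-1,k,2}$ requires $k+2\le n-1$; hence $|\CC_{k+2,k,2}| = |\CC_{k+1,k-1,2}|$, which equals $\binom{k}{k} = 1$ by the induction hypothesis.

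The inductive step is then Pascal's rule,
\[
|\CC_{n,k,2}| \;=\; \binom{n-3}{k} + \binom{n-3}{k-1} \;=\; \binom{n-2}{k},
\]
which closes the induction.

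There is no substantial obstacle here: all of the difficulty has already been absorbed into Theorem~\ref{thm:main} (in particular the disjointness of the two families, which is what makes the recurrence valid rather than merely a count with multiplicity). The proposition is essentially combinatorial bookkeeping, recording that the BCFW-type recursion produces exactly the number of cells predicted by \cite[Conjecture~8.1]{KWZ}.
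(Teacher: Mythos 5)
Your proof is correct and follows essentially the same route as the paper: extract the recurrence $|\CC_{n,k,2}| = |\CC_{n-1,k,2}| + |\CC_{n-1,k-1,2}|$ from the disjoint union in Theorem~\ref{thm:main} and close by Pascal's rule. The paper anchors the induction at $k=1$ (via Proposition~\ref{prop:Ra97}) rather than at $k=0$ and $n=k+2$ as you do, but the two anchorings are interchangeable; incidentally, the Pascal identity as you write it, $\binom{n-2}{k}=\binom{n-3}{k}+\binom{n-3}{k-1}$, is the correct form of the one invoked (and slightly misprinted) in the paper's proof.
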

\begin{proof}
The $k=1$  case follows from Proposition~\ref{prop:Ra97} below. Now the proposition follows from Theorem~\ref{thm:main} via an induction on both $k$ and $n$, and the following well-known identity: 
\[
	\begin{pmatrix}n-2 \\ k\end{pmatrix}  =  \begin{pmatrix}n-2 \\ k-1\end{pmatrix} + \begin{pmatrix}n-3 \\ k\end{pmatrix}.\qedhere
\]
\end{proof}

\subsection{An example}\label{ex:1}
We shall simplify the notation in examples, and write 
\[
1\dot{3}2 = \CP^J_{s_3, s_1s_3s_2;>0}, \quad\text{etc.}.	
\]

We start with an example for the amplituhedron $\CA_{5,2,2}$. In this case, we construct recursively a collection of cells in $\Gr_{2,5}^{\ge 0}$.

We have $\CC_{4,2,2} = \{2132\}$ and $\CC_{4, 1, 2} = \{3\dot{2}1,21\}$ or $\{\dot{3}21,32\dot{1}\}$. If we start with $\CC_{4, 1, 2} = \{3\dot{2}1,21\}$, then we have 
\begin{align*}
	&\CC_{5,2,2} \\
	= & \iota_{\text{pre}} ( \CC_{4,2,2} ) \sqcup \sigma_{2,5}^{-2} \cdot U^-(s_2s_1)\cdot  \iota_{\text{inc}} ( \CC_{4, 1, 2}) \\
	= &\{2132\} \sqcup \sigma_{2,5}^{-2}  \cdot U^-(s_2s_1)  \cdot  \{4\dot{3}2, 32\}\\
	= &\{2132\} \sqcup \sigma_{2,5}^{-2}  \cdot \{214\dot{3}2, 2132\}\\
	 = &\{2132, 32\dot{1}4\dot{3}2, \dot{3}21\dot{4}32\}.
\end{align*}

On the other hand, we have 
\[
 \CC_{5,2,2} = \{2132, 214\dot{3}2, 3\dot{2}14\dot{3}2\}, \quad \text{ if } \CC_{4, 1, 2}  = \{\dot{3}21,32\dot{1}\}.
\]

\section{Preparation}\label{sec:pre}
In this section, we prove several technical lemmas and study the amplituhedron at $k=1$, which shall be the base case of our induction.

\subsection{Reductions} 
Let $H = I - \{k+m\}$. Let $W_H $ be the parabolic subgroup of $W$, where we denote the longest element by $w_0^H$. 

The image of $Z^t$ via the natural map 
\[
	 \Mat_{n,k+m}^{\times} \rightarrow \Mat_{n,k+m}^{\times} / \GL_{k+m} \cong \Gr_{k+m,n}
\]
is totally positive.  Therefore thanks to \S\ref{subsec:Lu}, we have 
\begin{equation}\label{eq:Z}
	Z =  h \cdot \kbordermatrix{
   		&   1		&	\cdots	&   k+m       	& 	&   \cdots 	&		 n	\\
	& 1 		& \cdots 		& 0		& 0	   & \cdots 	& 0	\\
	& \vdots	&  \ddots 		& \vdots	& \vdots		     &		\ddots	&\vdots\\
	&0 		& \cdots 		& 1 		& 0 		     & \cdots 	& 0\\
} \cdot g
\end{equation}
with a unique $g \in U^+(w_0 w^H_{0}) \text{ and some }  h \in \GL^+_{k+m}$.  Since the morphism $\widetilde{Z}$ is $\GL_{k+m}$-equivariant, we can assume $h \in \GL_{k+m}$ is the identity up to isomorphism. Then we have 
\[
	\widetilde{Z}: \Gr_{k,n}^{\ge 0} \longrightarrow \Gr^{\ge 0}_{k,k+m} \subset \Gr_{k,k+m}.
\]

\begin{lemma}\label{lem:Zg}
Let $C$ be a cell in $\Gr_{k,n}^{\ge 0}$ and $\{C_i\}$ be a (finite) collection cells in $\overline{C}$, such that 
\[
\overline{Z C} = \bigcup_i \overline{Z C_i}, \qquad \text{ for any $Z \in \text{Mat}_{k+m, n}^{>0}$.}
\]

Then  $\overline{Z\cdot { U^-(s_j) \cdot C}} =  \bigcup_{i} \overline{ {Z\cdot U^-(s_j) \cdot C_i}}$.
\end{lemma}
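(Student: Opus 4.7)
The plan is to reduce the statement about $Z\cdot U^-(s_j)\cdot C$ to the given hypothesis about $Z'\cdot C$ by reparametrizing: if right multiplication by an element of $U^-(s_j)$ preserves $\Mat_{k+m,n}^{>0}$, then for each $a>0$ we may apply the hypothesis to $Z_a := Z\cdot y_j(a)$, and the conclusion follows by taking unions.

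The key observation, which I would verify first, is that $Z_a\in\Mat_{k+m,n}^{>0}$ whenever $Z\in\Mat_{k+m,n}^{>0}$ and $a>0$. This is a direct minor computation: right multiplication by $y_j(a)$ replaces column $j$ of $Z$ by column $j$ plus $a$ times column $j+1$, leaving all other columns untouched. By multilinearity of the determinant, for $J\in\binom{[n]}{k+m}$ we have $\Delta_J(Z_a)=\Delta_J(Z)$ if $j\notin J$ or $\{j,j+1\}\subset J$, and $\Delta_J(Z_a)=\Delta_J(Z)+a\,\Delta_{J'}(Z)$ with $J'=(J\setminus\{j\})\cup\{j+1\}$ otherwise. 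In every case the total positivity of $Z$ and $a>0$ force $\Delta_J(Z_a)>0$.

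With this in hand, associativity gives $Z\cdot(y_j(a)\cdot X)=Z_a\cdot X$ for any $X\subseteq \Gr_{k,n}^{\ge 0}$, and the hypothesis applied to $Z_a$ yields
\[
\overline{Z\cdot y_j(a)\cdot C}=\overline{Z_a\cdot C}=\bigcup_i\overline{Z_a\cdot C_i}=\bigcup_i\overline{Z\cdot y_j(a)\cdot C_i}
\]
for every $a>0$. For the inclusion $\bigcup_i\overline{Z\cdot U^-(s_j)\cdot C_i}\subseteq\overline{Z\cdot U^-(s_j)\cdot C}$, each $C_i\subseteq\overline{C}$, so by continuity of the $U^-(s_j)$-action followed by left multiplication by $Z$ one has $Z\cdot U^-(s_j)\cdot C_i\subseteq\overline{Z\cdot U^-(s_j)\cdot C}$, and the inclusion follows after taking closures and the finite union. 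For the reverse inclusion, write $Z\cdot U^-(s_j)\cdot C=\bigcup_{a>0}Z_a\cdot C$; by the displayed equality each $Z_a\cdot C$ is contained in $\bigcup_i\overline{Z\cdot U^-(s_j)\cdot C_i}$, which is closed because $\{C_i\}$ is finite, and taking the closure of the union on the left concludes.

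No step presents a substantive obstacle; the proof is essentially formal once the stability of $\Mat_{k+m,n}^{>0}$ under right multiplication by positive $y_j(a)$ is recorded, and the combinatorial content of the lemma is entirely captured by that stability.
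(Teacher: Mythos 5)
Your proof is correct and follows the same strategy as the paper's: observe that $Z\cdot y_j(a)\in\Mat_{k+m,n}^{>0}$ for every $a>0$, apply the hypothesis to each such matrix, take unions over $a$, and use finiteness of $\{C_i\}$ to commute closure with the finite union. You spell out the stability of $\Mat_{k+m,n}^{>0}$ under right multiplication by $y_j(a)$ and the final closure bookkeeping in more detail than the paper, which simply asserts the former and compresses the latter into one sentence, but the argument is the same.
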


\begin{proof}
 Recall  
\[
	U^-(s_j) \cdot C = \bigcup_{a \in \mathbb{R}_{>0} }y_j(a) \cdot C.
\]
Since $Z \cdot  y_j(a) \in \text{Mat}_{k+m, n}^{>0}$ for any fixed $a \in \mathbb{R}_{>0}$, we have 
\[
	\bigcup_{a \in \mathbb{R}_{> 0}} \overline{Z\cdot y_j(a)  \cdot C} =  \bigcup_{a \in \mathbb{R}_{> 0}}  \bigcup_{i} \overline{{Z\cdot y_j(a) \cdot C_i}}.
\]

Since $\{C_i\}$ is a finite set, the lemma follows by taking the closure on both sides of the equality.
 \end{proof}

\begin{corollary}\label{cor:cyclic}

Let $m$ be even, and let $\CC_{n,k,m}$ be a collection of cells in $\Gr_{k,n}^{\ge 0}$ that triangulates $\CA_{n,k,m}$.  Then $\sigma_{k,n} \cdot \{C_\alpha \}$ triangulates $\CA_{n, k, m}$. 
\end{corollary}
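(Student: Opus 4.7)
The plan is to reduce the claim to the hypothesis by means of the identity
$$
Z \cdot (\sigma_{k,n} \cdot A) = (Z \cdot \sigma_{k,n}) \cdot A
$$
for any $A \in \Mat_{n,k}^{\ge 0}$. Setting $Z' := Z \cdot \sigma_{k,n}$, this reads $\widetilde{Z}(\sigma_{k,n} \cdot A) = \widetilde{Z'}(A)$, so the behaviour of $\{\sigma_{k,n}\cdot C_\alpha\}$ with respect to $\widetilde{Z}$ is the same as the behaviour of $\{C_\alpha\}$ with respect to $\widetilde{Z'}$. Thus, provided we can show that $Z' \in \Mat_{k+m,n}^{>0}$, the triangulation property at $Z'$ (which is given by hypothesis) translates directly into the triangulation property of $\{\sigma_{k,n}\cdot C_\alpha\}$ at $Z$.

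The first step is therefore the sign computation showing $Z' \in \Mat_{k+m,n}^{>0}$, and this is precisely where the parity assumption on $m$ enters. Right multiplication by $\sigma_{k,n}$ cyclically shifts the columns of $Z$ by one position and multiplies the column which wraps around by $(-1)^{k-1}$. For any subset $I=\{i_1<\cdots<i_{k+m}\}\subset[n]$ with $n\notin I$, the corresponding maximal minor of $Z'$ equals a maximal minor of $Z$ (with indices shifted by one), hence is positive. If $n\in I$, the same minor picks up the sign $(-1)^{k-1}$ from $\sigma_{k,n}$ together with a sign $(-1)^{k+m-1}$ incurred by reordering the wrapped column back to the leftmost position; the product is $(-1)^{2k+m-2}=(-1)^m$, which equals $+1$ exactly when $m$ is even. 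So $Z'\in \Mat_{k+m,n}^{>0}$.

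Once $Z'\in \Mat_{k+m,n}^{>0}$ is established, the remaining verifications are formal. By \eqref{eq:cyclic} applied to both $\sigma_{k,n}$ and $\sigma_{k,n}^{-1}$, the cyclic shift is a homeomorphism of $\Gr_{k,n}^{\ge 0}$ preserving the cellular structure, so each $\sigma_{k,n}\cdot C_\alpha$ is a cell of the same dimension as $C_\alpha$ and $\sigma_{k,n}\cdot \Gr_{k,n}^{\ge 0}=\Gr_{k,n}^{\ge 0}$, which in turn gives $\CA_{n,k,m}(Z)=\CA_{n,k,m}(Z')$. Injectivity of $C_\alpha\mapsto Z'\cdot C_\alpha$ transfers to injectivity of $\sigma_{k,n}\cdot C_\alpha\mapsto Z\cdot (\sigma_{k,n}\cdot C_\alpha)$; the disjointness of the images $\{Z'\cdot C_\alpha\}$ transfers likewise; and the open density of $\bigcup_\alpha Z'\cdot C_\alpha$ in $\CA_{n,k,m}(Z')$ becomes the open density of $\bigcup_\alpha Z\cdot (\sigma_{k,n}\cdot C_\alpha)$ in $\CA_{n,k,m}(Z)$.

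The only non-routine ingredient is the sign calculation, and the parity condition on $m$ is used there and nowhere else. This is consistent with the theme of the paper that amplituhedra behave markedly better for even $m$: odd $m$ breaks the stability of total positivity under cyclic rotation.
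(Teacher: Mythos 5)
Your proposal is correct and follows essentially the same route as the paper: reduce the claim at $Z$ to the hypothesis at $Z' = Z\cdot\sigma_{k,n}$ via the identity $\widetilde{Z}(\sigma_{k,n}\cdot A)=\widetilde{Z'}(A)$, using that $Z\mapsto Z\cdot\sigma_{k,n}$ preserves $\Mat_{k+m,n}^{>0}$ when $m$ is even. The paper disposes of the positivity of $Z'$ in a single line by pointing to \eqref{eq:cyclic}, but that equation literally concerns the signed row shift $\sigma_{k,n}\cdot M$ for $M\in\Mat_{n,k}^{\ge 0}$ (where the sign $(-1)^{k-1}$ works for every $m$), not the column shift $Z\cdot\sigma_{k,n}$ on $\Mat_{k+m,n}^{>0}$; your explicit minor-by-minor sign computation $(-1)^{k-1}\cdot(-1)^{k+m-1}=(-1)^m$ is exactly the point the paper elides, and it is the only place the parity of $m$ enters, so your write-up is if anything more careful than the paper's.
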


\begin{proof}
Thanks to \eqref{eq:cyclic}, 
\[
\sigma_{k,n}: \text{Mat}_{k+m, n}^{>0} \longrightarrow \text{Mat}_{k+m, n}^{>0}, \quad Z \mapsto Z \cdot \sigma_{k,n}
\]
is a bijection. The corollary follows. 
\end{proof}

\begin{proposition}
\label{prop:reduc}
\begin{enumerate}
	\item 	If $\CC_{n-1,k,m}$ triangulates $\CA_{n-1,k,m}$, then $\iota_{\text{pre}}(\CC_{n-1,k,m})$ triangulates $Z \cdot \overline{\CP^J_{k,n-1}}$.
	\item 	If $\CC_{n-1,k-1,m}$ triangulates $\CA_{n-1,k-1,m}$, then $\iota_{\text{inc}}( \CC_{n-1,k-1,m})$ triangulates $Z \cdot \overline{\CP^J_{k-1,n-1}}$.
\end{enumerate}
\end{proposition}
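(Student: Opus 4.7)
The plan is to prove both parts by reducing the amplituhedron map $\widetilde{Z}$ restricted to the image of each embedding to an amplituhedron map on a smaller totally nonnegative Grassmannian, and then transferring the three triangulation conditions through this identification.

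\textbf{Part (1).} Let $Z'$ denote the submatrix of $Z$ formed by its first $n-1$ columns. A direct computation shows that for every $A \in \Mat^{\ge 0}_{n-1, k}$,
\[
Z \cdot \iota_{\text{pre}}(A) = Z' \cdot A.
\]
Every maximal minor of $Z'$ is a maximal minor of $Z$, so $Z' \in \Mat_{k+m, n-1}^{>0}$. Hence the restriction of $\widetilde{Z}$ to $\iota_{\text{pre}}(\Gr_{k,n-1}^{\ge 0})$ coincides with the amplituhedron map $\widetilde{Z'}$ composed with $\iota_{\text{pre}}$, and its image is exactly $\CA_{n-1, k, m}(Z')$. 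Since $\overline{\CP^J_{k,n-1}} = \iota_{\text{pre}}(\Gr_{k,n-1}^{\ge 0})$ by \S\ref{sub:emb}, the hypothesis that $\CC_{n-1, k, m}$ triangulates $\CA_{n-1, k, m}(Z')$ transfers mechanically to give injectivity, pairwise disjointness, and surjectivity of $\iota_{\text{pre}}(\CC_{n-1, k, m})$ onto $Z \cdot \overline{\CP^J_{k,n-1}}$.

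\textbf{Part (2).} Write $Z = [\,z_1 \mid \widehat{Z}\,]$, where $z_1 \in \BR^{k+m}$ is the first column and $\widehat{Z}$ is the $(k+m)\times (n-1)$ submatrix formed by the remaining columns. For $A \in \Mat^{\ge 0}_{n-1, k-1}$,
\[
Z \cdot \iota_{\text{inc}}(A) = [\,z_1 \mid \widehat{Z} A\,]
\]
is a $k$-plane in $\BR^{k+m}$ always containing $z_1$. Let $V \subset \Gr_{k, k+m}$ be the locus of $k$-planes through $z_1$. The projection $\pi : \BR^{k+m} \twoheadrightarrow \BR^{k+m}/\langle z_1 \rangle \cong \BR^{k+m-1}$ induces an isomorphism $V \xrightarrow{\sim} \Gr_{k-1, k+m-1}$. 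Fixing the standard-basis complement of $z_1$ (using $z_{11} > 0$), one computes that $\pi$ sends $[z_1 \mid \widehat{Z} A]$ to the column span of $Z'' A$, where $Z'' \in \Mat_{k+m-1, n-1}$ is the Schur complement of the $(1,1)$-entry of $Z$, namely
\[
z''_{ij} = z_{i+1, j+1} - \frac{z_{i+1, 1}\, z_{1, j+1}}{z_{11}}.
\]
The key algebraic input is that $Z'' \in \Mat_{k+m-1, n-1}^{>0}$, which follows from the classical Schur-complement minor identity
\[
\det\bigl(Z''_{I, J}\bigr) \;=\; \frac{1}{z_{11}}\, \det\bigl(Z_{\{1\} \cup (I+1),\; \{1\} \cup (J+1)}\bigr),
\]
exhibiting each maximal minor of $Z''$ as a positive rescaling of a maximal minor of $Z$ (the size constraint $k+m \le n$ guarantees the right-hand determinants exist). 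Once total positivity of $Z''$ is established, the restriction of $\widetilde{Z}$ to $\iota_{\text{inc}}(\Gr_{k-1,n-1}^{\ge 0})$ is identified through $\pi$ with the amplituhedron map $\widetilde{Z''} : \Gr_{k-1, n-1}^{\ge 0} \to \Gr_{k-1, k+m-1}$. Using $\overline{\CP^J_{k-1, n-1}} = \iota_{\text{inc}}(\Gr_{k-1, n-1}^{\ge 0})$ and the bijectivity of $\pi|_V$, the hypothesis that $\CC_{n-1, k-1, m}$ triangulates $\CA_{n-1, k-1, m}(Z'')$ yields the three triangulation conditions for $\iota_{\text{inc}}(\CC_{n-1, k-1, m})$ on $Z \cdot \overline{\CP^J_{k-1, n-1}}$.

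The principal obstacle is the total positivity of the Schur complement $Z''$ in Part (2); although the minor identity is classical, one must take care with the change of basis realising $V \cong \Gr_{k-1, k+m-1}$ to ensure $Z''$ is well-defined up to the $\GL_{k+m-1}$-action (under which the amplituhedron is invariant). Everything else in the argument is a direct unwinding of the definitions of $\iota_{\text{pre}}$, $\iota_{\text{inc}}$, and $\CP^J_{k,n-1}$, $\CP^J_{k-1,n-1}$.
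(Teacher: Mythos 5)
Your argument is correct and, for part~(2), fills in a step the paper leaves implicit.

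For part~(1), your identity $Z\cdot\iota_{\text{pre}}(A)=Z'\cdot A$ with $Z'$ the first $n-1$ columns of $Z$ is exactly what the paper proves, though they phrase it via the Marsh--Rietsch normalization $Z=[\,I\,|\,0\,]\cdot g$ with $g\in U^+(w_0w_0^H)$ and the Levi projection $\pi\colon U^+_{\ge 0}\to U^+_{n-1,\ge 0}$: their $\pi(Z)$ is precisely your $Z'$. Your version is more elementary and arguably cleaner.

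For part~(2), the paper simply declares the proof ``similar'' and omits it. Your proof shows that it is not a literal mirror image of part~(1): one must additionally quotient $\Gr_{k,k+m}$ by the line $z_1=Ze_1$ and identify the locus of $k$-planes through $z_1$ with $\Gr_{k-1,k+m-1}$; the Schur-complement minor identity is the correct tool, and your computation that every maximal minor of $Z''$ equals $z_{11}^{-1}$ times a maximal minor of $Z$ is right. There is, however, a small unjustified step: you write ``using $z_{11}>0$,'' but total positivity of $Z$ in Postnikov's sense (all $(k+m)\times(k+m)$ minors positive) does not force $z_{11}>0$; one can write down $Z\in\Mat_{k+m,n}^{>0}$ with $z_{11}<0$. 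The fix is the normalization the paper invokes right after \eqref{eq:Z}: since $\widetilde Z$ is $\GL_{k+m}$-equivariant, one may assume $Z=[\,I\,|\,0\,]\cdot g$ with $g\in U^+(w_0w_0^H)$, in which case $Ze_1=e_1$, so $z_{11}=1$. This normalization in fact simplifies your whole part~(2): with $z_1=e_1$ the projection to $\langle e_2,\dots,e_{k+m}\rangle$ is just deletion of the first coordinate, the Schur complement collapses to the submatrix $Z_{[2,k+m],[2,n]}$, and positivity of its maximal minors follows from positivity of the maximal minors of $Z$ through column $1$ by expanding along that column. Either way, your approach is sound once the $z_{11}>0$ assumption is justified by equivariance.
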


\begin{remark}
The triangulation of $Z \cdot \overline{\CP^J_{k,n-1}}$ (resp., $Z \cdot \overline{\CP^J_{k-1,n-1}}$) is in the obvious sense generalizing Definition~\ref{def:tri}.
\end{remark}
\begin{proof}
We prove the first claim. The second claim is similar, whose proof shall be omitted. Recall \eqref{eq:Z} we have 
\[
	Z =  \kbordermatrix{
   		&   1		&	\cdots	&   k+m       	& 	&   \cdots 	&		 n	\\
	& 1 		& \cdots 		& 0		& 0	   & \cdots 	& 0	\\
	& \vdots	&  \ddots 		& \vdots	& \vdots		     &		\ddots	&\vdots\\
	&0 		& \cdots 		& 1 		& 0 		     & \cdots 	& 0\\
} \cdot g,
\]
where $g \in U^+(w_0 w^H_{0})$.

Let $\GL_{n-1}$ be the standard Levi subgroup of $\GL_n$ with simple roots $I - \{ n-1\}$. Let $U^+_{n-1,\ge 0} = U^+_{\ge 0} \cap \GL_{n-1}$. Let $\pi$ be the projection $\pi: U^+_{\ge 0} \rightarrow U^+_{n-1,\ge 0}$.  We write 
\[
\pi(Z) = 
  \kbordermatrix{
   		&   1		&	\cdots	&   k+m       	 	&   \cdots 	&		 n-1	\\
	& 1 		& \cdots 		& 0			   & \cdots 	& 0	\\
	& \vdots	&  \ddots 		& \vdots			     &		\ddots	&\vdots\\
	&0 		& \cdots 		& 1 		 		     & \cdots 	& 0\\
} \cdot \kbordermatrix{ & 1 & \cdots & n-1 & n\\
 & 1 & \cdots & 0 & 0\\
 & \vdots & \ddots & \vdots & \vdots\\
 & 0 & \cdots & 1 & 0
}
\cdot
\pi(g) \in \Mat_{k+m, n-1}^{>0},
\]
Note that the image of the $k$-preserving embedding $\Gr^{\ge 0}_{k,n-1} \to \Gr^{\ge 0}_{k,n} $ is $U^+_{\ge 0}$-invariant and stabilized  by $x_1(a)$ for any $a \in \mathbb{R}$.
Hence we have the following commutative diagram 
\[
\xymatrix{\Gr^{\ge 0}_{k,n-1} \ar[dr]_-{\widetilde{\pi(Z)}} \ar[r] & \Gr^{\ge 0}_{k,n} \ar[d]^-{\tilde{Z}} \\
& \Gr^{\ge 0}_{k,k+2}}.
\]
The claim follows.
\end{proof}

\subsection{The cyclic polytope}
The $k=1$ amplituhedrons (for any $m$) are also called cyclic polytopes. They have been studied in details by Rambau \cite{Ra97}.  

For $1<a < n$, we set 
$$
\CP^J_{1, n}(a)=\CP^J_{s_{[a-1, 2]} , s_{[a, 1]} ; >0} \subset \Gr_{1,n}^{\ge 0}.
$$ 

\begin{proposition}\cite{Ra97}\label{prop:Ra}
The collection of  cells 
\[
	\{ \CP^J_{1, n}(a) \vert 1<a < n\}
\]
triangulates the amplituhedron $\CA_{n,1,2}$ for any $n\ge 3$.
\end{proposition}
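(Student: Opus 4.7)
The plan is to make the statement concrete by computing each cell $\CP^J_{1,n}(a)$ explicitly via the Marsh-Rietsch parametrization, identify the amplituhedron $\CA_{n,1,2}(Z)$ with the convex hull of the columns of $Z$ in $\BP^2 = \Gr_{1,3}$, and recognize the resulting decomposition as the standard fan triangulation of this convex polygon. No induction on $n$ should be needed.

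First, I would apply the parametrization of \S1.4 to $w' = s_{[a,1]}$ with reduced expression $(s_a, s_{a-1}, \ldots, s_2, s_1)$ and $w = s_{[a-1,2]}$. The unique positive subexpression for $w$ keeps the middle factors $s_{a-1}, \ldots, s_2$ (contributing $\dot s_j$) and skips the first factor $s_a$ and the last factor $s_1$ (each contributing $y_j(\BR_{>0})$); positivity of this subexpression is an easy Bruhat-order check and uniqueness then identifies it. So
\[
G^{>0}_{\underline w_+, \underline w'} = \{\, y_a(t)\, \dot s_{a-1} \dot s_{a-2} \cdots \dot s_2\, y_1(s) : s, t > 0 \,\}.
\]
Applying such an element to the base point $(1, 0, \ldots, 0)^t$ and tracking the vector step by step---each $\dot s_j$ shifts the nonzero entry from position $j$ to position $j+1$ with no sign issue since the affected neighbour is always zero, and finally $y_a(t)$ adds an $(a{+}1)$-st entry---one obtains, projectively,
\[
\CP^J_{1,n}(a) = \{\, [c_1 : 0 : \cdots : 0 : c_a : c_{a+1} : 0 : \cdots : 0] : c_1, c_a, c_{a+1} > 0 \,\}.
\]
Thus $\CP^J_{1,n}(a)$ is the $2$-dimensional cell whose nonzero Pl\"ucker coordinates are exactly $\Delta_1, \Delta_a, \Delta_{a+1}$. (Alternatively, one may identify this locus directly as a positroid cell using the dictionary of \S\ref{sub:ex} and bypass the MR computation.)

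Next, let $Z_1, \dots, Z_n \in \BR^3$ denote the columns of $Z$. Then $\widetilde Z$ sends $[x_1 : \cdots : x_n]$ to $[\sum_i x_i Z_i] \in \Gr_{1,3}$, so $\CA_{n,1,2}(Z)$ is the projectivized convex cone generated by $Z_1, \ldots, Z_n$. Total positivity of $Z$ implies that any three of the $Z_i$ are linearly independent and that, in a suitable affine chart, the points $Z_1, \ldots, Z_n$ appear as the vertices of a convex $n$-gon in cyclic order. In view of the cell description above, $\widetilde Z$ restricts to a bijection from $\CP^J_{1,n}(a)$ onto the open triangle with vertices $Z_1, Z_a, Z_{a+1}$.

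The three conditions of Definition~\ref{def:tri} then reduce to elementary convex geometry. Injectivity on each cell is the linear independence of $Z_1, Z_a, Z_{a+1}$; disjointness and surjectivity follow from the classical fact that $\{[Z_1, Z_a, Z_{a+1}] : 2 \le a \le n-1\}$ is the fan triangulation of $\mathrm{conv}(Z_1, \ldots, Z_n)$ from the vertex $Z_1$. The only nontrivial step is the MR calculation of the cell; once the cells are recognized as the open triangles of a fan, there is nothing left to prove. The main potential obstacle is simply bookkeeping---confirming that no stray signs appear in the iterated $\dot s_j$ action and that the positive subexpression is indeed the one described---but both are routine once set up.
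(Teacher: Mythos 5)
Your proof is correct, and it takes a genuinely different route from the paper. The paper simply cites Rambau~\cite{Ra97} for this proposition with no proof of its own; Rambau's work classifies \emph{all} triangulations of cyclic polytopes, of which the fan triangulation from a vertex is one instance. You instead give a direct, self-contained argument specialized to $m=2$. Your Marsh--Rietsch computation of $\CP^J_{1,n}(a)$ is accurate: the positive subexpression for $s_{[a-1,2]}$ inside the reduced word $(s_a, s_{a-1},\dots,s_1)$ skips exactly the outer letters $s_a$ and $s_1$, so $G^{>0}_{\underline w_+,\underline w'}=\{y_a(t)\,\dot s_{a-1}\cdots \dot s_2\, y_1(s): s,t>0\}$, and applying this to the base point $e_1$ gives $e_1 + s\, e_a + st\, e_{a+1}$ (the $-1$ in each $\dot s_j$ is harmless since the displaced coordinate is always zero). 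Hence $\widetilde{Z}$ carries $\CP^J_{1,n}(a)$ onto the open triangle $\{[c_1Z_1+c_aZ_a+c_{a+1}Z_{a+1}]: c_i>0\}$ in $\Gr_{1,3}$. Total positivity of $Z$ (all $3\times 3$ minors positive) places $Z_1,\dots,Z_n$ in convex position in cyclic order in an affine chart, so these are precisely the open triangles of the fan triangulation of the $n$-gon from vertex $1$, and the three conditions of Definition~\ref{def:tri} reduce to plane geometry. What your approach buys is a fully self-contained verification that also makes explicit which triangulation is being realized; what the citation to Rambau buys is the flexibility of the full classification of triangulations, which is, for instance, what justifies the two choices of $\CC_{4,1,2}$ in Example~\ref{ex:1}.
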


\begin{proposition}\label{prop:Ra97}  We have $\overline{Z \cdot \CP^J_{1,n-1}} \cup  \overline{ Z \cdot \sigma_{1,n}^{-2} \cdot U^-(s_2s_1) \cdot \CP^J_{1,1;>0}} = \CA_{n, 1, 2} $.

\end{proposition}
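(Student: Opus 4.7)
The plan is to match both sides against Rambau's triangulation of the cyclic polygon $\CA_{n,1,2}$ supplied by Proposition~\ref{prop:Ra}: its right-hand side equals the union of the $n-2$ closed triangles $\overline{Z\cdot \CP^J_{1,n}(a)}$ with $2\le a\le n-1$, so it suffices to check that the ``preserving'' piece $\overline{Z\cdot \CP^J_{1,n-1}}$ is exactly the union of the $n-3$ triangles with $2\le a\le n-2$, while the ``shifted'' piece $\overline{Z\cdot \sigma^{-2}_{1,n}\cdot U^-(s_2s_1)\cdot \CP^J_{1,1;>0}}$ is exactly the remaining triangle at $a=n-1$.

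For the shifted piece, I would specialise Proposition~\ref{prop:cyc} to $k=1$ (valid since $n\ge 3$), in which $\CP^J_{0,n-1}=\CP^J_{1,1;>0}$ by convention and the formula reads
\[
\sigma_{1,n}^{-2}\cdot U^-(s_2s_1)\cdot \CP^J_{1,1;>0} \;=\; \CP^J_{s_{[n-2,2]},\,s_{[n-1,1]};>0} \;=\; \CP^J_{1,n}(n-1),
\]
identifying the second piece with the top triangle of Rambau's fan. For the preserving piece, Proposition~\ref{prop:reduc}(1) asserts that any triangulation $\CC_{n-1,1,2}$ of $\CA_{n-1,1,2}$ yields a triangulation $\iota_{\text{pre}}(\CC_{n-1,1,2})$ of $Z\cdot\overline{\CP^J_{1,n-1}}$; taking $\CC_{n-1,1,2}=\{\CP^J_{1,n-1}(a):2\le a\le n-2\}$ from Proposition~\ref{prop:Ra} applied at $n-1$, everything reduces to the cell-level identification
\[
\iota_{\text{pre}}\bigl(\CP^J_{1,n-1}(a)\bigr) \;=\; \CP^J_{1,n}(a), \qquad 2\le a\le n-2.
\]
This is essentially a stability statement for the Marsh--Rietsch parametrisation: for $a\le n-2$ every simple reflection appearing in the reduced expression $s_{[a-1,2]}\le s_{[a,1]}$ lies in the Levi Weyl subgroup $S_{n-1}\subset S_n$, so the groups $G^{>0}_{\underline w_+,\underline w'}$ on the two sides coincide, and $\iota_{\text{pre}}$ intertwines their actions on the common base point $P_J$.

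Combining these two identifications and invoking Proposition~\ref{prop:Ra} applied to $\CA_{n,1,2}$ yields
\[
\overline{Z\cdot \CP^J_{1,n-1}}\cup \overline{Z\cdot \sigma_{1,n}^{-2}\cdot U^-(s_2s_1)\cdot \CP^J_{1,1;>0}} \;=\; \bigcup_{a=2}^{n-1}\overline{Z\cdot \CP^J_{1,n}(a)} \;=\; \CA_{n,1,2}(Z),
\]
which is exactly the required equality. The only mildly delicate point in this plan is the cell compatibility $\iota_{\text{pre}}(\CP^J_{1,n-1}(a))=\CP^J_{1,n}(a)$; everything else is a direct consequence of Propositions~\ref{prop:Ra}, \ref{prop:cyc}, and~\ref{prop:reduc}.
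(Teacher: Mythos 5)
Your proposal is correct and follows essentially the same route as the paper's proof: identify the shifted piece with $\CP^J_{1,n}(n-1)$ via Proposition~\ref{prop:cyc}, identify the preserving piece with the union of $\overline{Z\cdot\CP^J_{1,n}(a)}$ for $2\le a\le n-2$ via Proposition~\ref{prop:reduc} and Proposition~\ref{prop:Ra}, and then conclude by Proposition~\ref{prop:Ra} applied to $\CA_{n,1,2}$. The only addition you make is a short justification of the cell compatibility $\iota_{\text{pre}}(\CP^J_{1,n-1}(a))=\CP^J_{1,n}(a)$, which the paper leaves implicit; that is a minor elaboration, not a change of strategy.
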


\begin{proof}
Thanks to  Proposition~\ref{prop:Ra}, we have 
\[
	\CA_{n, 1, 2} = \left(\bigcup_{a =2 }^{n-2}\overline{Z \cdot\CP^J_{1, n}(a) } \right)\bigcup \overline{Z \cdot\CP^J_{1, n}(n-1)}.
\]
By Proposition~\ref{prop:reduc} and Proposition~\ref{prop:Ra}, we have 
\[
	\overline{ Z \cdot \CP^J_{1,n-1}} = \bigcup_{a=2}^{n-2}\overline{Z \cdot  \CP^J_{1, n}(a)}.
\]
It follows from Proposition~\ref{prop:cyc} that 
\[
\sigma_{1,n}^{-2} \cdot U^-(s_2s_1) \cdot \CP^J_{1,1;>0} = \CP^J_{1, n}(n-1).
\]
The proposition follows.
\end{proof}

\begin{remark}
We only need Proposition~\ref{prop:Ra97} in order to prove Theorem~\ref{thm:main}. One can actually deduce Proposition~\ref{prop:Ra97} directly using linear algebra. We refer to \cite[\S3]{AHT13} fo some discussion. 
\end{remark}

\section{Proof of the main theorem}\label{sec:proof}
We prove Theorem~\ref{thm:main} in this section via induction on both $k$ and $n$.  We shall often consider various different initial data, in particular when applying the induction hypothesis.
\subsection{Injectivity}\label{subsec:inj}

\begin{lemma}\label{lem:inj}

\begin{enumerate}
	\item Let $C \in \iota_{\text{pre}} ( \CC_{n-1, k, 2} )$. Then the map
	\[
	\widetilde{Z} : C \longrightarrow \Gr_{k,k+2}
\]
is injective. 
		
	\item 
Let $C \in \iota_{\text{inc}} (\CC_{n-1, k-1, 2} )$. Then the map
\[
	\widetilde{Z} : \sigma_{k,n}^{-2} \cdot U^-(s_2s_1) \cdot C \longrightarrow \Gr_{k,k+2}
\]
is injective. 
\end{enumerate}
\end{lemma}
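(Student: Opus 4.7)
My plan for part~(1) is to apply Proposition~\ref{prop:reduc}(1) directly. The commutative diagram there yields $\widetilde{Z}\circ\iota_{\text{pre}}=\widetilde{\pi(Z)}$ with $\pi(Z)\in\Mat_{k+2,n-1}^{>0}$. Since $\CC_{n-1,k,2}$ triangulates $\CA_{n-1,k,2}$ for every totally positive initial datum (by the inductive hypothesis on $n$), the map $\widetilde{\pi(Z)}|_{C'}$ is injective for each $C'\in\CC_{n-1,k,2}$; composing with the injection $\iota_{\text{pre}}$ yields injectivity of $\widetilde{Z}$ on $C=\iota_{\text{pre}}(C')$.

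For part~(2), my plan proceeds in three steps. First, I absorb the cyclic shift into the initial datum: since $m=2$ is even, a Pl\"ucker-sign analysis (analogous to Corollary~\ref{cor:cyclic}) shows that $Z'' := Z\sigma_{k,n}^{-2}\in\Mat_{k+2,n}^{>0}$, and since $\sigma_{k,n}^{-2}$ is a bijection on $\Gr_{k,n}^{\ge 0}$, the claim reduces to injectivity of $\widetilde{Z''}$ on $U^-(s_2s_1)\cdot C$, where $C=\iota_{\text{inc}}(C')$ for $C'\in\CC_{n-1,k-1,2}$. Second, by the Marsh--Rietsch parametrization together with Proposition~\ref{prop:cyc}, every element of $U^-(s_2s_1)\cdot C$ is uniquely written as $y_2(a)y_1(b)\iota_{\text{inc}}(A')$ with $a,b>0$ and $A'\in C'$, and a direct calculation gives $\widetilde{Z''}(y_2(a)y_1(b)\iota_{\text{inc}}(A'))=Z_{a,b}\cdot\iota_{\text{inc}}(A')$ where $Z_{a,b} := Z''y_2(a)y_1(b)$ is itself totally positive (a direct minor check shows that right multiplication by $y_i(c)$ with $c>0$ preserves total positivity of rectangular matrices).

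Third, given the image $V:=Z_{a,b}\cdot\iota_{\text{inc}}(A')\in\Gr_{k,k+2}$, I plan to recover $(a,b)$ first and then $A'$. Since $\iota_{\text{inc}}(A')$ has first column $e_1$, the first column of $Z_{a,b}\iota_{\text{inc}}(A')$ equals the distinguished vector $z_{a,b} := Z''e_1+bZ''e_2+abZ''e_3$, so $z_{a,b}\in V\cap\mathrm{span}(Z''e_1,Z''e_2,Z''e_3)$. Assuming this intersection equals the line $\langle z_{a,b}\rangle$, the coefficients $(1:b:ab)$ in the basis $\{Z''e_i\}_{i=1}^{3}$ uniquely recover $(a,b)$: total positivity of $Z''$ gives linear independence of $Z''e_1,Z''e_2,Z''e_3$, and the coefficient $1$ of $Z''e_1$ pins down the scalar. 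Once $(a,b)$ is known, Proposition~\ref{prop:reduc}(2) applied to the totally positive matrix $Z_{a,b}$ yields injectivity of $\widetilde{Z_{a,b}}$ on $\iota_{\text{inc}}(C')$, recovering $A'$.

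The main obstacle is verifying that $V\cap\mathrm{span}(Z''e_1,Z''e_2,Z''e_3)$ is exactly the one-dimensional line $\langle z_{a,b}\rangle$. A dimension count forces this intersection to be of dimension at least one; for the reverse inequality, one must show that the column span of $\bar{Z}'_{a,b}A'$ (where $\bar{Z}'_{a,b}$ denotes columns $2$ through $n$ of $Z_{a,b}$) intersects $\mathrm{span}(Z''e_1,Z''e_2,Z''e_3)$ only at zero. I expect this to follow from a Cauchy--Binet expansion of the determinant $\det[\,z_{a,b}\mid\bar{Z}'_{a,b}A'_{:,I}\,]$ as a positive combination of maximal Pl\"ucker minors of $Z''$, all of which are strictly positive, combined with the positroid structure of $C'\in\CC_{n-1,k-1,2}$ to ensure that the contributing minors of $A'$ do not all vanish. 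This transversality computation is the technical heart of the proof.
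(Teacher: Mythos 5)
Part~(1) of your proposal matches the paper's proof exactly: apply Proposition~\ref{prop:reduc}(1) and the inductive hypothesis.

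For part~(2), your route is genuinely different from the paper's. The paper does not attempt to recover $(a,b)$ from the image via Pl\"ucker coordinates; instead it writes $Z'=[\,\mathrm{Id}_{k+2}\mid 0\,]\,g$ and invokes Lusztig's factorization $g\,y_2(a_2)y_1(a_1)=y_2(b_2)y_1(b_1)\,\chi\,g'$ with $\chi$ diagonal, $g'\in U^+(w_0w_0^H)$, and $(a_1,a_2)\mapsto(b_1,b_2)$ a bijection. This identity, combined with the commutation $[\,\mathrm{Id}_{k+2}\mid 0\,]\,y_2(a_2)y_1(a_1)=y_2(a_2)y_1(a_1)\,[\,\mathrm{Id}_{k+2}\mid 0\,]$, pins the image inside the positroid cell $y_2(b_2)y_1(b_1)\cdot\CP^J_{1,\,s_{[3,2]}\cdots s_{[k+1,k]}}$ of $\Gr_{k,k+2}^{\ge 0}$, and these cells are disjoint for distinct $(b_1,b_2)$ by Marsh--Rietsch. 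The paper thus never needs to compute the intersection $V\cap\mathrm{span}(Z''e_1,Z''e_2,Z''e_3)$ directly; the factorization absorbs that computation.

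The serious issue with your proposal is step~3, the transversality, and the resolution you sketch does not close it. Carrying out the Cauchy--Binet expansion carefully, one finds
\[
\det\bigl[\,Z''e_1\mid Z''e_2\mid Z''e_3\mid \hat{Z}_{a,b}A'\,\bigr]
=\sum_{J\subset\{3,\dots,n-1\},\ |J|=k-1}(\text{positive})\cdot\Delta_J(A'),
\]
because the first three columns force the row set $I$ in $\Delta_I(B)$ to contain $\{1,2,3\}$, which in turn forces the contributing minors of $A'$ to have index sets $J\subset\{3,\dots,n-1\}$. This sum is strictly positive iff $\Delta_J(A')>0$ for some such $J$, and that is \emph{not} automatic for $A'\in\Gr_{k-1,n-1}^{\ge 0}$ (for $k-1=1$ the point $A'=e_1$ has $\Delta_J(A')=0$ for every $J\subset\{3,\dots,n-1\}$). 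Since $\CC_{n-1,k-1,2}$ in Theorem~A is an \emph{arbitrary} triangulating collection, you cannot read this nonvanishing off ``the positroid structure of $C'$'' a priori. What actually makes the transversality hold is the inductive fact that the image of $C'$ under the totally positive matrix $(\chi g')_{[2{:}k+2],[2{:}n]}\in\Mat_{k+1,n-1}^{>0}$ is open in $\Gr_{k-1,k+1}$ and hence lands in $\Gr_{k-1,k+1}^{>0}$ (this is Corollary~\ref{cor:open} at lower rank, applied to the induced initial datum); your Cauchy--Binet expansion alone, without that inductive input, cannot conclude. So the gap you flagged is real, and your expected resolution would not succeed as stated; the paper's factorization approach is precisely what sidesteps it.

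Two smaller remarks: the uniqueness of the decomposition $y_2(a)y_1(b)\iota_{\text{inc}}(A')$ needs the Marsh--Rietsch length condition $\ell(s_2s_1w')=\ell(w')+2$ with $s_2s_1w'\in W^J$, which you invoke but do not check; and the displayed determinant $\det[\,z_{a,b}\mid\bar{Z}'_{a,b}A'_{:,I}\,]$ is not of the right shape (it should include $Z''e_2$ and $Z''e_3$ among the columns to produce a $(k+2)\times(k+2)$ matrix).
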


\begin{proof}
Note that item (1) follows from  the induction hypothesis and Proposition~\ref{prop:reduc}. We prove (2) here. 
  
Since $Z \in \text{Mat}_{k+m, n}^{> 0}$, then
\[
	Z \cdot \sigma_{k,n}^{-2} \cdot y_2(a_2) y_1(a_1) \in  \text{Mat}_{k+m, n}^{> 0}, \quad \text{ for any } a_2, a_1 \in \mathbb{R}_{>0}.
\]
It follows from the induction hypothesis and Proposition~\ref{prop:reduc} that  
\[
\widetilde{Z} : \sigma_{k,n}^{-2} \cdot y_2(a_2) y_1(a_1) \cdot C \longrightarrow \Gr_{k,k+2}\quad
\]
is injective for fixed $a_1, a_2  \in \mathbb{R}_{>0}$.

Hence it suffices to prove 
\[
Z \cdot \sigma_{k,n}^{-2} \cdot y_2(a_2) y_1(a_1) \cdot C \cap Z \cdot \sigma_{k,n}^{-2} \cdot y_2(a'_2) y_1(a'_1) \cdot C = \emptyset, \quad \text{ if } (a_1, a_2) \neq (a'_1, a'_2).
\]
Let  $Z' =  Z \cdot \sigma_{k,n}^{-2} \in \Mat_{k+m, n}^{>0}$.
Since $\iota_{\text{inc}} ( \CC_{n-1, k-1, 2} )$ triangulates $Z' \cdot \CP^J_{k-1,n-1}$ by induction, it suffices to prove 
\[
Z'  \cdot y_2(a_2) y_1(a_1) \cdot  \CP^J_{k-1,n-1} \cap Z' \cdot y_2(a'_2) y_1(a'_1) \cdot \CP^J_{k-1,n-1} = \emptyset, \quad \text{ if } (a_1, a_2) \neq (a'_1, a'_2).
\]

Recall again \eqref{eq:Z}, we can assume 
\[
Z' =  \kbordermatrix{
   		&   1		&	\cdots	&   k+m       	& 	&   \cdots 	&		 n	\\
	& 1 		& \cdots 		& 0		& 0	   & \cdots 	& 0	\\
	& \vdots	&  \ddots 		& \vdots	& \vdots		     &		\ddots	&\vdots\\
	&0 		& \cdots 		& 1 		& 0 		     & \cdots 	& 0\\
} \cdot g
\]
with some $g \in U^+(w_0 w^H_{0})$.

Thanks to \cite[\S1.3]{Lu94}, we have 
\[
	g \cdot y_2(a_2) y_1(a_1) = y_2(b_2) y_1(b_1) \cdot \chi \cdot g',
\]
for some diagonal matrix $\chi \in G$, $g' \in U^+(w_0 w^H_{0})$, and $b_1$, $b_2 \in \mathbb{R}_{> 0}$. In particular, the map $(a_2, a_1) \mapsto (b_2, b_1)$ is bijective.

Also note that 
\begin{align*}
 \kbordermatrix{
   		&   1		&	\cdots	&   k+m       	& 	&   \cdots 	&		 n	\\
	& 1 		& \cdots 		& 0		& 0	   & \cdots 	& 0	\\
	& \vdots	&  \ddots 		& \vdots	& \vdots		     &		\ddots	&\vdots\\
	&0 		& \cdots 		& 1 		& 0 		     & \cdots 	& 0\\
}  \cdot y_2(a_2) y_1(a_1) \\
= y_2(a_2) y_1(a_1) \cdot  \kbordermatrix{
   		&   1		&	\cdots	&   k+m       	& 	&   \cdots 	&		 n	\\
	& 1 		& \cdots 		& 0		& 0	   & \cdots 	& 0	\\
	& \vdots	&  \ddots 		& \vdots	& \vdots		     &		\ddots	&\vdots\\
	&0 		& \cdots 		& 1 		& 0 		     & \cdots 	& 0\\
}.
\end{align*}

Therefore, we have 
\[
	Z'  \cdot y_2(a_2) y_1(a_1) \cdot  \CP^J_{k-1,n-1}  \subset  y_2(b_2) y_1(b_1) \CP^J_{1, s_{[3,2]}s_{[4,3]}\cdots s_{[k+1,k]}} \subset \Gr_{k,k+2}^{\ge 0}
\]
and 
\[
Z'  \cdot y_2(a'_2) y_1(a'_1) \cdot  \CP^J_{k-1,n-1}  \subset  y_2(b'_2) y_1(b'_1) \CP^J_{1, s_{[3,2]}s_{[4,3]}\cdots s_{[k+1,k]}} \subset \Gr_{k,k+2}^{\ge 0},
\]
where $(b_1, b_2) \neq (b'_1, b'_2)$, if $(a_1, a_2) \neq (a'_1, a'_2)$. The lemma follows.
\end{proof}

\begin{corollary}\label{cor:open}
Let $C \in  \CC_{n,k,m} $. Then $Z\cdot C$ is open in $\Gr_{k,k+m}$.
\end{corollary}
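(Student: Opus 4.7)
The plan is to deduce openness directly from the injectivity established in Lemma~\ref{lem:inj}, together with a dimension count and the classical invariance of domain theorem.

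First, I would verify by induction on the construction in Theorem~\ref{thm:main} that every cell $C \in \CC_{n,k,2}$ has real dimension exactly $2k = \dim_{\mathbb R} \Gr_{k,k+2}$. The base case $k=1$ is the collection $\{\CP^J_{1,n}(a)\}$ from Proposition~\ref{prop:Ra}, and each such cell has dimension $\ell(s_{[a,1]}) - \ell(s_{[a-1,2]}) = 2$. For the inductive step, the $k$-preserving embedding $\iota_{\text{pre}}$ is a closed embedding of cells preserving dimension, while the construction $\sigma_{k,n}^{-2}\cdot U^-(s_2 s_1)\cdot \iota_{\text{inc}}(-)$ adds exactly $2$ to the dimension (from $U^-(s_2 s_1)$, a two-parameter family that acts freely; cf.\ the argument in the proof of Lemma~\ref{lem:inj}). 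So the inductive hypothesis that cells in $\CC_{n-1,k,2}$ and $\CC_{n-1,k-1,2}$ have dimensions $2k$ and $2(k-1)$ respectively propagates to dimension $2k$ for cells of $\CC_{n,k,2}$.

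Next, by the Marsh--Rietsch parametrization recalled in \S\ref{subsec:Lu}, each such cell $C$ is semi-algebraically isomorphic to $\mathbb{R}^{2k}_{>0}$, hence in particular is homeomorphic to an open subset of $\mathbb{R}^{2k}$. The amplituhedron map $\widetilde{Z}$ is the restriction of an algebraic morphism, so $\widetilde{Z}|_C: C \longrightarrow \Gr_{k,k+2}$ is continuous; it is also injective by Lemma~\ref{lem:inj}.

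Finally I would invoke Brouwer's invariance of domain: a continuous injective map from an open subset of $\mathbb{R}^{2k}$ to a $2k$-dimensional topological manifold has open image. Since $\Gr_{k,k+2}$ is a real manifold of dimension $2k$, this immediately yields that $Z \cdot C = \widetilde{Z}(C)$ is open in $\Gr_{k,k+2}$, which is the corollary. The only potential subtlety is ensuring the dimension match — everything else is formal from Lemma~\ref{lem:inj} — and this is handled by the inductive dimension count above.
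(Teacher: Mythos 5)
Your proof is correct and follows essentially the same route as the paper: identify $C$ with an open subset of $\mathbb{R}^{km}$ via the cell parametrization, use the injectivity from Lemma~\ref{lem:inj}, and apply Brouwer's invariance of domain to a $km$-dimensional target manifold. The only difference is that you verify the dimension $\dim C = 2k$ by an explicit induction along the recursion, whereas the paper takes this for granted because Definition~\ref{def:tri} already restricts a triangulation to cells of dimension $\dim\Gr_{k,k+m}$; your check is a harmless (and reassuring) redundancy rather than a different argument.
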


\begin{proof}
Recall \eqref{eq:Z}, we assume 
\[
Z =   \kbordermatrix{
   		&   1		&	\cdots	&   k+m       	& 	&   \cdots 	&		 n	\\
	& 1 		& \cdots 		& 0		& 0	   & \cdots 	& 0	\\
	& \vdots	&  \ddots 		& \vdots	& \vdots		     &		\ddots	&\vdots\\
	&0 		& \cdots 		& 1 		& 0 		     & \cdots 	& 0\\
} \cdot g,
\]
with a unique $g \in U^+(w_0 w^H_{0}) \text{ and some }  h \in \GL^+_{k+m}$. Then we have 
\[
	\tilde{Z}: \Gr_{k,n}^{\ge 0} \supset C \cong \mathbb{R}_{>0}^{km} \longrightarrow \mathbb{R}^{km}_{>0 } \cong \Gr^{> 0}_{k,k+m} \subset \Gr_{k,k+m}.
\]

We have already prove that $\tilde{Z}$ is injective in Lemma~\ref{lem:inj}. Hence by Brouwer's theorem of invariance of the domain, we know the image is open.
\end{proof}

\subsection{Disjointness}
Note that the disjointness within each family  $ \iota_{\text{pre}} (  \CC_{n-1, k, 2} )$ and $ \sigma_{k,n}^{-2} \cdot U^-(s_2s_1) \cdot \iota_{\text{inc}} ( \CC_{n-1, k-1, 2} )$ follows by induction hypothesis and Proposition~\ref{prop:reduc}. Therefore, it suffices to prove, for any $C \in \ip$ and $D \in \ii$, 
\[
	Z \cdot C  \cap Z \cdot D =\emptyset.
	\]

Let $Z = [Z_1,Z_2,\dots,Z_n]$, where $Z_i$'s are the columns of $Z$. For any $1 \le i, j \le n$, we define the following map
\begin{align*}
	[i,j]: \Gr_{k,k+2}^{\ge 0} &\longrightarrow \mathbb{R}/\GL_1^{+} = \{-1, 0 ,1\},\\
		A \in \text{Mat}^{\ge 0}_{k+2 \times k}&\mapsto \det{[A,Z_i,Z_j]}/\GL_1^{+},
\end{align*}
where $[A,Z_i,Z_j] \in \Mat_{k+2, k+2}$ denote the new matrix constructed in the obvious way. Note that while the matrix $[A,Z_i,Z_j] \in \Mat_{k+2, k+2}$ depends on the choice of $A$, the function $[i,j]$ depends only on the point in $\Gr_{k,n}^{\ge 0}$ we start with.

We also defined the pullback 
\[
	\widetilde{[i,j]}:\xymatrix{ \Gr_{k,n}^{\ge 0} \ar[r]^-{\tilde{Z}}  & \Gr_{k,k+2}^{\ge 0} \ar[r]^-{[i,j]} & \mathbb{R}/\GL_1^{+}}.
\] 

\begin{lem}
Let $C \in \ip$ and $D \in \ii$. We have 
\begin{align*}
	[1,n-1](Z\cdot C) = \widetilde{[1,n-1]} (C) &=(-1)^{k}, \\
	[1,n-1](Z\cdot D)  =  \widetilde{[1,n-1]} ( D) &= (-1)^{k-1}.
\end{align*}

Therefore, we have 
\[
	Z \cdot C \cap Z \cdot D=\emptyset.
\]
\end{lem}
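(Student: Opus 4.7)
The plan is to compute $\widetilde{[1,n-1]}$ explicitly via Cauchy--Binet, obtaining on each cell a formula of the shape ``(sign)$\times$(sum of nonnegative terms)'', and then use the openness of $Z\cdot C$ and $Z\cdot D$ in $\Gr_{k,k+2}$ (Corollary~\ref{cor:open}) to promote a generic sign equality into disjointness.

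Since $[Z\cdot B, Z_1, Z_{n-1}] = Z\cdot [B, e_1, e_{n-1}]$, where $[B, e_1, e_{n-1}] \in \Mat_{n, k+2}$ denotes the matrix obtained by appending the standard basis columns $e_1, e_{n-1}$ to $B$, Cauchy--Binet yields
\[
	\det[Z\cdot B, Z_1, Z_{n-1}] = \sum_{S \in \binom{[n]}{k+2}} \det(Z|_S)\, \det([B, e_1, e_{n-1}]|_S).
\]
Expansion along the two appended columns forces $\{1, n-1\} \subseteq S$. Writing $T := S \setminus \{1, n-1\}$, a careful sign-tracking of the row/column swaps shows that the second factor equals $(-1)^k\, \Delta_T(B)$ when $n \notin T$ and $(-1)^{k-1}\, \Delta_T(B)$ when $n \in T$ (the flip reflects that $n-1$ sits one position further left in the ordering of $S$).

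For $B \in C$ with $C = \iota_{\text{pre}}(C')$, the last row of $B$ vanishes so only $T \subseteq \{2, \ldots, n-2\}$ contribute, giving
\[
	\det[Z\cdot B, Z_1, Z_{n-1}] = (-1)^k \sum_{\substack{T \subseteq \{2,\ldots,n-2\} \\ |T| = k}} \Delta_T(B')\, \det(Z|_{\{1, n-1\}\cup T}),
\]
a sum of nonnegative terms scaled by $(-1)^k$ (using $\Delta_T(B') \ge 0$ and $\det(Z|_\bullet) > 0$). For $B \in D$, write $B = \sigma_{k,n}^{-2} y_2(a_2) y_1(a_1) \iota_{\text{inc}}(B'')$ and compute its rows directly (essentially already done in the proof of Lemma~\ref{lem:inj}): one finds that the first column of $B$ vanishes on rows $2, \ldots, n-2$, with entries $(-1)^{k-1}$ and $(-1)^{k-1} a_1$ in rows $n-1$ and $n$ respectively. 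Hence $\Delta_T(B) = 0$ whenever $n \notin T$, while for $T = T' \cup \{n\}$ with $T' \subseteq \{2, \ldots, n-2\}$, $|T'| = k - 1$, expansion along the first column gives $\Delta_T(B) = a_1\, \Delta_{T'+1}(B'')$, where $T' + 1 := \{t+1 : t \in T'\} \subseteq \{3, \ldots, n-1\}$. Combining with the $(-1)^{k-1}$ Cauchy--Binet sign,
\[
	\det[Z\cdot B, Z_1, Z_{n-1}] = (-1)^{k-1} a_1 \sum_{T'} \Delta_{T'+1}(B'')\, \det(Z|_{\{1, n-1, n\}\cup T'}),
\]
again a sum of nonnegative terms scaled by $(-1)^{k-1}$ (as $a_1 > 0$).

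For the disjointness, observe that the function $A \mapsto \det[A, Z_1, Z_{n-1}]$ on $\Gr_{k,k+2}$ is not identically zero: since $Z$ is totally positive, the columns $Z_1, Z_{n-1}$ are linearly independent (witnessed by any positive maximal minor of $Z$ involving columns $1$ and $n-1$), so the function is nonzero at the span of $k$ coordinate vectors complementary to $\mathrm{span}(Z_1, Z_{n-1})$. Hence its vanishing locus is a proper closed subvariety of $\Gr_{k, k+2}$, and intersecting with the open sets $Z\cdot C$ and $Z\cdot D$ (open by Corollary~\ref{cor:open}) produces open dense subsets on which $\widetilde{[1,n-1]}$ equals the constants $(-1)^k$ and $(-1)^{k-1}$, respectively. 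If $Z\cdot C \cap Z\cdot D$ were nonempty, it would be open, and a common point in both nonvanishing loci would force $(-1)^k = (-1)^{k-1}$, a contradiction. Hence $Z \cdot C \cap Z \cdot D = \emptyset$.

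\textbf{The main obstacle} is the sign bookkeeping for the $D$ case, where one must correctly combine (i) the Cauchy--Binet sign from the position of $n-1$ in $S$, (ii) the $(-1)^{k-1}$ entries in rows $n-1, n$ of the parametrization of $B$ coming from $\sigma_{k,n}^{-2}$, and (iii) the expansion of $\Delta_T(B)$ along the first column with the nonzero entry in row $n$. These must conspire to shift the overall sign from $(-1)^k$ to $(-1)^{k-1}$ relative to the $C$ case, yielding precisely the sign discrepancy that drives the disjointness.
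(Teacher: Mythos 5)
Your proof is correct and follows essentially the same route as the paper: expand $\det[Z\cdot B, Z_1, Z_{n-1}]$ multilinearly over the Pl\"ucker coordinates of $B$, observe that the sign of each surviving term is $(-1)^k$ for $C \in \ip$ and $(-1)^{k-1}$ for $D \in \ii$ (so the value lies in $\{0,(-1)^k\}$ resp.\ $\{0,(-1)^{k-1}\}$), and then use the openness of $Z\cdot C$ and $Z\cdot D$ in $\Gr_{k,k+2}$ (Corollary~\ref{cor:open}) to rule out the value $0$.

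The only substantive difference is where the sign bookkeeping happens. The paper computes $\widetilde{[1,n-1]}$ on the two ambient cells $\CP^J_{k,n-1}$ and $\CP^J_{s_{[n-2,k+1]},\ldots}$ (whose closures contain every $C$ and $D$ respectively), so that the nonvanishing Pl\"ucker coordinates of any subcell form a subset of the positive ones, and the sign is inherited for free. You instead unwind the explicit matrix representative of $D$ (via $\sigma_{k,n}^{-2}\,y_2(a_2)y_1(a_1)\,\iota_{\mathrm{inc}}(B'')$) and recompute $\Delta_T(B)$ from scratch. Both work; the paper's version avoids ever parametrizing $D$ explicitly. One small imprecision in your $D$ computation: the first column of $B$ also has a nonzero entry $a_1a_2$ in row $1$, which you do not mention; this is harmless only because $1\notin T$ is already forced by $1\in S$, but it is worth stating since you claim to have described the whole first column.
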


\begin{proof}
Thanks to Proposition~\ref{prop:reduc}, we know 
\[
\ip \text{ triangulates } Z \cdot \overline{\CP^J_{k,n-1}},
\]
\[
\ii \text{ triangulates } Z \cdot \overline{\CP^J_{s_{[n-2, k+1]}, s_{[n-k, 1]} s_{[n-k+1, 2]} \cdots s_{[n-1, k]}; >0}}.
\]

It follows by direct computation (mod $\GL^{+}_{1}$) that 
\begin{align*}
&\widetilde{[1,n-1]} (\CP^J_{k,n-1}) \\
= & \sum_{1\le i_1 < i_2 \cdots <i_k\le n-1} \Delta_{i_1,i_2,\cdots, i_k }(\CP^J_{k,n-1})\det[Z_{i_1}, Z_{i_2}, \cdots, Z_{i_{k}}, Z_1, Z_{n-1}] \\
= & (-1)^{k}.
\end{align*}

We similarly have 
\begin{align*}
&\widetilde{[1,n-1]}  (\CP^J_{s_{[n-2, k+1]}, s_{[n-k, 1]} s_{[n-k+1, 2]} \cdots s_{[n-1, k]}; >0}) \\
=& \sum_{1\le i_1 < i_2 \cdots <i_k= n}\Delta_{i_1,i_2,\cdots, i_k }(\CP^J_{s_{[n-2, k+1]}, s_{[n-k, 1]} s_{[n-k+1, 2]} \cdots s_{[n-1, k]}; >0})\\
 &\cdot \det[Z_{i_1}, Z_{i_2}, \cdots, Z_{n}, Z_1, Z_{n-1}]\\
 = & (-1)^{k-1}.
\end{align*}
So we have $\widetilde{[1,n-1]}  (C) \subset \{0, (-1)^{k}\}$ and $\widetilde{[1,n-1]} (D) \subset \{0,(-1)^{k-1}\}$. It also follows directly from the computation that we can only have $\widetilde{[1,n-1]}  (C) =$ either $0$ or $(-1)^{k}$. Similarly, we can only have  $\widetilde{[1,n-1]} (D) = $ either $0$ or $(-1)^{k-1}$. 

Then thanks to Corollary \ref{cor:open}, both $Z\cdot C$ and $Z \cdot D$ are open in $\Gr_{k,k+2}^{\ge 0}$. Therefore, we must have 
\[
\widetilde{[1,n-1]}  (C)   = [1,n-1](Z\cdot C) =    (-1)^{k},  \widetilde{[1,n-1]} (D) = [1,n-1](Z\cdot D) = (-1)^{k-1}.
\]
The lemma follows. 
\end{proof}

\subsection{Surjectivity}

\begin{lemma}\label{lem:k=1}We have 
\[
	\overline{Z \cdot \CP^J_{s_{[n-2, k+1]}, s_{[n-k, 1]} s_{[n-k+1, 2]} \cdots s_{[n-1, k]}; >0}} \cup \overline{Z \cdot \CP^J_{1, s_{[n-k-1, 1]} s_{[n-k, 2]} \cdots s_{[n-2, k]}; >0}} = \mathcal{A}_{n,k,2}.
\]
\end{lemma}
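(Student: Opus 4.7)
The strategy is induction on $k$; the base case $k=1$ is exactly Proposition~\ref{prop:Ra97}.  Throughout, write $D_1 := \CP^J_{s_{[n-2, k+1]}, s_{[n-k, 1]} \cdots s_{[n-1, k]}; >0}$ and $D_2 := \CP^J_{1, s_{[n-k-1, 1]} \cdots s_{[n-2, k]}; >0}$, which by Proposition~\ref{prop:cyc} and \S\ref{sub:emb} equal $\sigma_{k,n}^{-2}\cdot U^-(s_2 s_1)\cdot \iota_{\text{inc}}(\Gr_{k-1, n-1}^{>0})$ and $\iota_{\text{pre}}(\Gr_{k,n-1}^{>0})$ respectively.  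The inclusion $\overline{Z\cdot D_1}\cup \overline{Z\cdot D_2}\subseteq \CA_{n,k,2}$ is immediate from the definition of the amplituhedron; the content of the lemma is the reverse inclusion.

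For the inductive step I plan to exploit the sign function $[1,n-1]:\CA_{n,k,2}\to\{-1,0,1\}$ from the Disjointness subsection, where it was shown that $[1,n-1]$ equals $(-1)^k$ on $Z\cdot D_2$ and $(-1)^{k-1}$ on $Z\cdot D_1$, with each $Z\cdot D_i$ open in $\Gr_{k,k+2}$ by Corollary~\ref{cor:open}.  The plan is to argue that any $p\in \CA_{n,k,2}$ with $[1,n-1](p)=(-1)^k$ lies in $\overline{Z\cdot D_2}$, any $p$ with $[1,n-1](p)=(-1)^{k-1}$ lies in $\overline{Z\cdot D_1}$, and points on the locus $\{[1,n-1]=0\}$ are then handled automatically by continuity as limits of points of nonzero sign.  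This dovetails with Proposition~\ref{prop:reduc}, which identifies $\overline{Z\cdot D_2}$ with the image of the smaller amplituhedron $\CA_{n-1,k,2}$ (via the $k$-preserving embedding of $Z$) and $\overline{Z\cdot D_1}$ with a cyclically shifted image of $\CA_{n-1,k-1,2}$, so the inductive hypothesis on smaller $k$ feeds naturally into the argument through these sub-amplituhedrons.

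The main obstacle is making the ``sign determines closure'' step rigorous: a priori one must rule out a $p\in \CA_{n,k,2}$ whose entire preimage $\widetilde{Z}^{-1}(p)\cap \Gr_{k,n}^{\ge 0}$ avoids $\overline{D_1\cup D_2}$.  I expect this requires an explicit analysis of the fibers of $\widetilde{Z}$ through the Marsh-Rietsch parametrization, producing a concrete representative of $\overline{D_1}\cup \overline{D_2}$ in every fiber.  As a backup strategy I would try simultaneous induction on $(n,k)$: the base case $n=k+2$ has $D_1 = \Gr_{k,k+2}^{>0}$ (the top Rietsch cell, since then $s_{[n-2,k+1]}=1$ and $s_{[n-k,1]}\cdots s_{[n-1,k]}$ is the longest element of $W^J$), so $\overline{Z\cdot D_1} = \CA_{k+2,k,2}$ trivially; the inductive step on $n$ would then combine Corollary~\ref{cor:cyclic} with the hypothesis at $(n-1,k)$ and $(n-1,k-1)$ to patch the two pieces together.
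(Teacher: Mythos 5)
Your primary strategy has a genuine gap that you yourself identify but do not resolve. The sign function $[1,n-1]$ shows that $Z\cdot D_1$ and $Z\cdot D_2$ live on opposite sides of a hyperplane section (that is exactly what the Disjointness argument uses it for), but it does not and cannot show that their closures together exhaust $\CA_{n,k,2}$: there is no a priori reason that a point $p$ with $[1,n-1](p)=(-1)^k$ must lie in $\overline{Z\cdot D_2}$ rather than in the image of some other part of $\Gr_{k,n}^{\ge 0}$. You would need an independent covering argument, and the ``analysis of fibers'' you gesture at is the actual content of the lemma, not a routine verification. Your backup (simultaneous induction on $(n,k)$) is also not fleshed out: the base case $n=k+2$ is correct, but you give no mechanism for the inductive step — it is not clear how knowing the statement at $(n-1,k)$ and $(n-1,k-1)$ would patch together to give it at $(n,k)$, and this inductive structure risks entangling with the induction already running in the proof of Theorem~\ref{thm:main}, which invokes this lemma.

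The paper proves the lemma with no induction at all, by a direct reduction to the $k=1$ case. The key observation is the factorization of the open cell
\[
\CP^J_{1,\, s_{[n-k,1]} s_{[n-k+1,2]}\cdots s_{[n-1,k]};>0}
= U^-\bigl(s_{[n-k,1]} s_{[n-k+1,2]}\cdots s_{[n-2,k-1]}\bigr)\cdot \CP^J_{1,\,s_{[n-1,k]};>0},
\]
together with the fact that $\CP^J_{1,s_{[n-1,k]};>0}$, $\CP^J_{1,s_{[n-2,k]};>0}$, and $\CP^J_{s_{[n-2,k+1]},\,s_{[n-1,k]};>0}$ are the images under the $k$-fold $k$-increasing embedding $\iota_{\text{inc}}^k$ of the corresponding cells of $\Gr_{1,n-k}^{\ge 0}$. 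One then feeds Proposition~\ref{prop:Ra97} (the $k=1$ cyclic-polytope decomposition) through Proposition~\ref{prop:reduc}(2) to obtain the covering for $\CP^J_{1,s_{[n-1,k]};>0}$, and finally applies Lemma~\ref{lem:Zg} to push this covering through multiplication by the fixed $U^-$ factor, which turns the two $k=1$ pieces into exactly $D_1$ and $D_2$. This is structurally unlike your proposal: it never decreases $n$, never invokes the sign function, and uses $\iota_{\text{inc}}$ iterated all the way down to $k=1$ rather than a step-by-step descent.
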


\begin{proof}

The open cell in $\Gr_{k, n}^{\ge 0}$ is 
\[
\CP^J_{1, s_{[n-k, 1]} s_{[n-k+1, 2]} \cdots s_{[n-1, k]}; >0}=U^{-}(s_{[n-k, 1]} s_{[n-k+1, 2]}\cdots s_{[n-2, k-1]}) \cdot \CP^J_{1, s_{[n-1, k]}; >0}.
\]

Note that 
\begin{align*}
\iota^k_{\text{inc}} (\CP^J_{1, s_{[n-k, 1]}; >0}) &= \CP^J_{1, s_{[n-1, k]}; >0},\\
\iota^k_{\text{inc}} (\CP^J_{1, s_{[n-k-1, 1]}; >0}) &= \CP^J_{1, s_{[n-2, k]}; >0},\\
\iota^k_{\text{inc}} (\CP^J_{s_{[n-k-1,2]}, s_{[n-k, 1]}; >0}) &=\CP^J_{s_{[n-2, k+1]}, s_{[n-1, k]}; >0}.
\end{align*}

By Proposition~\ref{prop:Ra97}, we have 
\[
 \overline{Z \cdot \CP^J_{1, s_{[n-k, 1]}; >0}} = \overline{Z \cdot  \CP^J_{1, s_{[n-k-1, 1]}; >0}} \cup \overline{Z \cdot  \CP^J_{s_{[n-k-1, 2]}, s_{[n-k, 1]}; >0}}.
\]

Then thanks to Proposition~\ref{prop:reduc}, we know 
\[
 \overline{Z \cdot \CP^J_{1, s_{[n-1, k]}; >0}} = \overline{Z \cdot  \CP^J_{1, s_{[n-2, k]}; >0}} \cup \overline{Z \cdot  \CP^J_{s_{[n-2, k+1]}, s_{[n-1, k]}; >0}}.
\]

Then by Lemma~\ref{lem:Zg}, 
$\mathcal{A}_{n,k,2}$ is covered by the closure of the union of  the images of
\begin{align*}
U^{-}(s_{[n-k, 1]} s_{[n-k+1, 2]}\cdots s_{[n-2, k-1]}) \cdot \CP^J_{s_{[n-2, k+1]}, s_{[n-1, k]}; >0} \\
=\CP^J_{s_{[n-2, k+1]}, s_{[n-k, 1]} s_{[n-k+1, 2]} \cdots s_{[n-1, k]}; >0},
\end{align*}
and  
$$
U^{-}(s_{[n-k, 1]} s_{[n-k+1, 2]}\cdots s_{[n-2, k-1]}) \cdot \CP^J_{1, s_{[n-2, k]}; >0}=\CP^J_{1, s_{[n-k-1, 1]} s_{[n-k, 2]} \cdots s_{[n-2, k]}; >0}.
$$
The Lemma is proved. 
\end{proof}

\begin{proposition}We have 

\[
	\bigcup_{C \in \CC_{n,k,2}} \overline{Z \cdot C}  = \CA_{n,k,2}.
\]
\end{proposition}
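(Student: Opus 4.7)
The plan is to induct on $n$, with the base case $k=1$ furnished by Proposition~\ref{prop:Ra97}, and use Lemma~\ref{lem:k=1} to split $\CA_{n,k,2}$ into two closed pieces that match the two families in $\CC_{n,k,2}$. Since $\widetilde{Z}$ is continuous and $\Gr_{k,n}^{\ge 0}$ is compact, the inclusion $\bigcup_{C\in\CC_{n,k,2}}\overline{Z\cdot C}\subseteq\CA_{n,k,2}$ is automatic, so only the reverse inclusion requires work. Combined with the identifications in \S\ref{sub:emb} and Proposition~\ref{prop:cyc}, Lemma~\ref{lem:k=1} reads
$$
\CA_{n,k,2}=\overline{Z\cdot\CP^J_{k,n-1}}\;\cup\;\overline{Z\cdot\sigma_{k,n}^{-2}\cdot U^-(s_2s_1)\cdot\CP^J_{k-1,n-1}},
$$
and it is enough to show each piece coincides with the closure of the union of the images of the corresponding family in $\CC_{n,k,2}$.

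For the first piece, I apply Proposition~\ref{prop:reduc}(1) to the inductive hypothesis that $\CC_{n-1,k,2}$ triangulates $\CA_{n-1,k,2}$: this says $\iota_{\text{pre}}(\CC_{n-1,k,2})$ triangulates $Z\cdot\overline{\CP^J_{k,n-1}}$. In particular, $\bigcup_{C'\in\CC_{n-1,k,2}}Z\cdot\iota_{\text{pre}}(C')$ is open dense in $\overline{Z\cdot\CP^J_{k,n-1}}$; since a finite union of closed sets is closed, taking closures yields the desired $\bigcup_{C'}\overline{Z\cdot\iota_{\text{pre}}(C')}=\overline{Z\cdot\CP^J_{k,n-1}}$.

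For the second piece, set $Z'=Z\cdot\sigma_{k,n}^{-2}\in\Mat_{k+2,n}^{>0}$ (as already used in \S\ref{subsec:inj}). By the inductive hypothesis $\CC_{n-1,k-1,2}$ triangulates $\CA_{n-1,k-1,2}$, so Proposition~\ref{prop:reduc}(2) yields, for \emph{every} totally positive matrix $Z''\in\Mat_{k+2,n}^{>0}$,
$$
\overline{Z''\cdot \CP^J_{k-1,n-1}}=\bigcup_{D'\in\CC_{n-1,k-1,2}}\overline{Z''\cdot\iota_{\text{inc}}(D')}.
$$
This is precisely the hypothesis of Lemma~\ref{lem:Zg} with $C=\CP^J_{k-1,n-1}$ and $\{C_i\}=\iota_{\text{inc}}(\CC_{n-1,k-1,2})$. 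Applying Lemma~\ref{lem:Zg} first with $j=1$, then (after replacing the cells by their images under $U^-(s_1)$) with $j=2$, and finally specializing $Z''$ to $Z'$, produces
$$
\overline{Z\cdot\sigma_{k,n}^{-2}\cdot U^-(s_2s_1)\cdot\CP^J_{k-1,n-1}}=\bigcup_{D'}\overline{Z\cdot\sigma_{k,n}^{-2}\cdot U^-(s_2s_1)\cdot\iota_{\text{inc}}(D')}.
$$
Assembling this with the first piece and Lemma~\ref{lem:k=1} closes the induction.

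The main subtlety I foresee is the second application of Lemma~\ref{lem:Zg}: the lemma is stated for a single simple reflection acting on a single cell, so to cascade through $U^-(s_2s_1)$ I must verify that its hypothesis persists after the first round. Inspecting its proof, however, shows that the only ingredient used is that $Z''\cdot y_j(a)\in\Mat_{k+2,n}^{>0}$ for every $a>0$ and every totally positive $Z''$, so the first round automatically supplies the hypothesis needed for the second. Everything else in the argument is a routine combination of Lemma~\ref{lem:k=1}, Proposition~\ref{prop:reduc}, and the inductive hypothesis.
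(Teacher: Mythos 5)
Your proof is correct and follows essentially the same route as the paper: split $\CA_{n,k,2}$ using Lemma~\ref{lem:k=1}, then cover each piece via Proposition~\ref{prop:reduc} applied to the inductive hypothesis, with Proposition~\ref{prop:cyc} identifying the second piece as $\sigma_{k,n}^{-2}\cdot U^-(s_2s_1)\cdot\CP^J_{k-1,n-1}$. Your explicit two-step cascade of Lemma~\ref{lem:Zg} (for $j=1$ then $j=2$, noting that the ``cell'' hypothesis is not actually used in its proof so the output of the first round legitimately feeds the second) is a more careful spelling-out of what the paper's citation of Propositions~\ref{prop:reduc} and~\ref{prop:cyc} leaves implicit, but it is the same underlying argument.
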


\begin{proof}
Thanks to Proposition~\ref{prop:reduc}, we have 
\[
	\bigcup_{C \in \ip} \overline{Z \cdot C} = \overline{Z \cdot {\CP^J_{s_{[n-2, k+1]}, s_{[n-k, 1]} s_{[n-k+1, 2]} \cdots s_{[n-1, k]}; >0}}}.
\]

Thanks to Proposition~\ref{prop:reduc} and Proposition~\ref{prop:cyc}, we have 
\[
	\bigcup_{C \in \ii} \mkern-60mu \overline{Z \cdot C}  = \overline{Z \cdot \CP^J_{s_{[n-2, k+1]}, s_{[n-1, k]}; >0}}.
\]
Now the proposition follows from Lemma~\ref{lem:k=1}.
\end{proof}

\section{An explicit collection of cells}\label{4-explicit}

In this section we construct explicitly one (non-recursive) collection of cells in $\Gr_{k,n}^{\ge 0}$ that triangulates $\CA_{n,k,2}$. 

For $1<a_1<a_2<\ldots<a_k \le n-1$, we set 
$$
\CP^J_{k, n}(a_1, a_2, \ldots, a_k)=\CP^J_{s_{[a_1-1, 2]} s_{[a_2-1, 3]} \cdots s_{[a_k-1, k+1]}, s_{[a_1, 1]} s_{[a_2, 2]} \cdots s_{[a_k, k]}; >0} \subset \Gr_{k,n}^{\ge 0}.
$$ 
Note that  $\dim \CP^J_{k, n}(a_1, \ldots, a_k)=2 k=\dim \Gr_{k,k+2}^{\ge 0}$.

\begin{theorem}\label{thm:m=2} 
The cells 
$$
\{\CP^J_{k, n}(a_1, a_2, \ldots, a_k); 1<a_1<a_2<\ldots<a_k \le n-1\}
$$
 gives a triangulation of the amplituhedron $\mathcal{A}_{n,k,2} $.
\end{theorem}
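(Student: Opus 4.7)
\bigskip
\noindent\textbf{Proof proposal.}
The plan is to argue by double induction on $k$ and $n$, using Theorem~\ref{thm:main} as the main tool. The base case $k=1$ is immediate: the collection $\{\CP^J_{1,n}(a):1<a\le n-1\}$ is exactly the Rambau triangulation of the cyclic polytope, so Proposition~\ref{prop:Ra} gives the conclusion. For the inductive step, I partition the explicit collection into two families according to the value of $a_k$:
\[
\CC^{(I)}_{n,k,2}=\{\CP^J_{k,n}(a_1,\dots,a_k):a_k\le n-2\},\qquad
\CC^{(II)}_{n,k,2}=\{\CP^J_{k,n}(a_1,\dots,a_{k-1},n-1)\},
\]
and aim to recognize each subfamily as one of the two pieces in the recursion of Theorem~\ref{thm:main}.

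For $\CC^{(I)}_{n,k,2}$, the Marsh--Rietsch data for $\CP^J_{k,n}(a_1,\dots,a_k)$ with $a_k\le n-2$ only involves simple reflections $s_1,\dots,s_{n-2}$ and the reduced word used is the same in $S_n$ as in $S_{n-1}$. Since $\iota_{\text{pre}}$ adds a zero row that is invisible to any such reflection, the image $\iota_{\text{pre}}(\CP^J_{k,n-1}(a_1,\dots,a_k))$ coincides with $\CP^J_{k,n}(a_1,\dots,a_k)$. Thus $\CC^{(I)}_{n,k,2}=\iota_{\text{pre}}(\CC_{n-1,k,2}^{\text{explicit}})$, and by the inductive hypothesis $\CC_{n-1,k,2}^{\text{explicit}}$ triangulates $\CA_{n-1,k,2}$.

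For $\CC^{(II)}_{n,k,2}$, the task is to exhibit a collection $\CC'\subset\Gr_{k-1,n-1}^{\ge 0}$ which triangulates $\CA_{n-1,k-1,2}$ and satisfies
\[
\sigma_{k,n}^{-2}\cdot U^-(s_2s_1)\cdot \iota_{\text{inc}}(\CC')=\CC^{(II)}_{n,k,2}.
\]
A natural candidate is a cyclic translate $\sigma_{k-1,n-1}^{r}\cdot\CC_{n-1,k-1,2}^{\text{explicit}}$ of the explicit lower-level collection (for some fixed $r$), which is a valid triangulation of $\CA_{n-1,k-1,2}$ by Corollary~\ref{cor:cyclic}. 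The analysis of the small case $(k,n)=(2,5)$ already indicates that this rotation is necessary (the explicit collection itself would produce a different, though equally valid, triangulation at the next level). Once this bijective identification is set up, applying Theorem~\ref{thm:main} with $\CC_{n-1,k,2}=\CC_{n-1,k,2}^{\text{explicit}}$ and $\CC_{n-1,k-1,2}=\CC'$ will yield the desired triangulation, and the resulting cells are exactly $\CC^{(I)}_{n,k,2}\cup\CC^{(II)}_{n,k,2}=\{\CP^J_{k,n}(a_1,\dots,a_k)\}$.

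The main obstacle will be the explicit matching in Family~II. This comes down to a Marsh--Rietsch computation: first, under $\iota_{\text{inc}}$ the simple reflections are relabelled by $s_i\mapsto s_{i+1}$, turning the data of a $\Gr_{k-1,n-1}^{\ge 0}$-cell into that of a $\Gr_{k,n}^{\ge 0}$-cell; second, multiplication by $U^-(s_2s_1)$ prepends the factor $s_2s_1$ to the reduced word of $w'$ without affecting the positive subexpression for $w$; finally, the action of $\sigma_{k,n}^{-2}$ must be tracked on the resulting cell. One can make this step uniform by computing the Plücker coordinates of a generic representative, as in the proof of Proposition~\ref{prop:cyc}, and matching the resulting matroid (together with dimensions and the positions of the ``dots'' in the positive subexpression) with the explicit description of $\CP^J_{k,n}(a_1,\dots,a_{k-1},n-1)$. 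The expected outcome is an indexing bijection between $\CC^{(II)}_{n,k,2}$ and the rotated explicit collection $\CC'$ in which $(a_1,\dots,a_{k-1})\mapsto(a_1,\dots,a_{k-1},n-1)$ up to the appropriate cyclic relabelling. Once this identification is in place, the triangulation statement follows immediately from Theorem~\ref{thm:main}.
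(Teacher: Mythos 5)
Your proposal is essentially identical to the paper's proof: the same double induction on $(k,n)$, the same split of the cells by whether $a_k\le n-2$ or $a_k=n-1$, and the same reduction to Theorem~\ref{thm:main} after taking a cyclic rotate of the explicit lower-level collection on the $k-1$ side. The rotation you leave as an unknown $r$ is $r=-1$ (the paper sets $\CC_{n-1,k-1,2}=\{\sigma_{k-1,n-1}^{-1}\cdot\CP^J_{k-1,n-1}(a_1,\dots,a_{k-1})\}$), and the Marsh--Rietsch computation you outline (tracking the relabelling under $\iota_{\text{inc}}$, the prefix $s_2s_1$ from $U^-(s_2s_1)$, and the action of $\sigma_{k,n}^{-2}$) is exactly what the paper carries out to land on $\CP^J_{k,n}(a_1,\dots,a_{k-1},n-1)$.
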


\begin{example}As in Example~\ref{ex:1}, we simplify the notation and write 
\[
1\dot{3}2 = \CP^J_{s_3, s_1s_3s_2; >0}, \quad\text{etc.}.	
\]
	\begin{enumerate}
		\item		In case of $\mathcal{A}_{4,1,2}$, the collection of cells in $\Gr_{1,4}^{\ge 0}$ is the following: 
		\[	21, \quad 3\dot{2}1.
		\]	
		In this case, the theorem follows from Proposition~\ref{prop:Ra97}.
		\item		In the case of $\mathcal{A}_{5,2,2}$, the collection of cells in $\Gr_{2,5}^{\ge 0}$ is the following:
		\[
			2132, \quad 214\dot{3}2, \quad 3\dot{2}14\dot{3}2.
		\]
		\item		In the case of $\mathcal{A}_{6,2,2}$, the collection of cells in $\Gr_{2,6}^{\ge 0}$ is the following:
		\[
			2132, \quad 214\dot{3}2, \quad  3\dot{2}14\dot{3}2,  \quad 4\dot{3}\dot{2}15\dot{4}\dot{3}2,  \quad 3\dot{2}15\dot{4}\dot{3}2,  \quad 215\dot{4}\dot{3}2.
		\]
		\item		In the case of $\mathcal{A}_{7,3,2}$, the collection of cells in $\Gr_{3,7}^{\ge 0}$ is the following:
		\begin{align*}	
			213243, \quad 21325\dot{4}3, \quad 21326\dot{5}\dot{4}3,  \quad  214\dot{3}25\dot{4}3,  \quad  214\dot{3}26\dot{5}\dot{4}3,  \quad  215\dot{4}\dot{3}26\dot{5}\dot{4}3, 
			\\
			3\dot{2}1 4\dot{3}25\dot{4}3, \quad 3\dot{2}1 4\dot{3}26\dot{5}\dot{4}3, \quad 3\dot{2}1 5\dot{4}\dot{3}2 6\dot{5}\dot{4}3,  \quad 4\dot{3}\dot{2}1 5\dot{4}\dot{3}2 6\dot{5}\dot{4}3.
		\end{align*}
\end{enumerate}
\end{example}

\begin{proof}
We prove the theorem by induction again. The base case $k=1$ is Proposition~\ref{prop:Ra97}. 
Now let 
\[
\CC_{n-1,k,2} = \{\CP^J_{k, n-1}(a_1, a_2, \ldots, a_{k} ); 1<a_1<a_2<\ldots<a_{k} \le n-2\}
\]
and 
\[
 \CC_{n-1,k-1,2}  = \{\sigma^{-1}_{k-1,n-1}\cdot \CP^J_{k-1, n-1}(a_1, a_2, \ldots, a_{k-1} ); 1<a_1<a_2<\ldots<a_{k-1} \le n-2\}.
\]

We know that 
\[
 \CC_{n-1,k,2}   \text{ triangulates } \CA_{n-1,k,2},
\]
and 
\[
 \CC_{n-1,k-1,2} \text{ triangulates } \CA_{n-1,k-1,2}, \text{ thanks to Corollary~\ref{cor:cyclic}}.
\]
We immediately have 
\[
\ip  = \{\CP^J_{k, n}(a_1, a_2, \ldots, a_{k} ); 1<a_1<a_2<\ldots<a_{k} \le n-2\}.
\]
It follows from direct computation that 
\begin{align*}
	&\sigma_{k,n}^{-2} \cdot U^-(s_2s_1) \cdot \iota _{\text{inc}}\Big(\sigma^{-1}_{k-1,n-1} \cdot \CP^J_{k-1, n-1}(a_1, a_2, \ldots, a_{k-1} )\Big) \\
	=&  \CP^J_{k-1, n-1}(a_1, a_2, \ldots, a_{k-1}, a_k=n-1), \text{ with } 1 < a_1 < \dots < a_{k-1}\le n-2.
\end{align*} 
Therefore 
\begin{align*}
&\ii \\
= & \{\CP^J_{k, n}(a_1, a_2, \ldots, a_{k} ); 1<a_1<\ldots<a_{k-1}<a_{k}= n-1\}.
\end{align*}

Finally, thanks to Theorem~\ref{thm:main}, 
\begin{align*}
 \CC_{n,k,2}  &=\ip \sqcup \ii \\ 
 &= \{\CP^J_{k, n}(a_1, a_2, \ldots, a_k); 1<a_1<a_2<\ldots<a_k \le n-1\}
\end{align*}
triangulates $\CA_{n,k,2}$.
\end{proof}

\section{Diagrammatic expression}\label{5-diag}
The cells in the totally nonnegative Grassmannian $\Gr_{k,n}^{\ge 0}$ can also be parameterized by plabic graphs, $\Le$-diagrams, decorated permutations, and many other combinatorial models (\cite{Pos}). The plabic graphs are called on-shell diagrams in the physical context, in which language the original BCFW recursion (for the $m=4$ amplituhedron) was formulated. We refer to \cite{KWZ} and  \cite{Pos} for details on different combinatorial parametrization of cells. 

In this section, we translate our main theorem into plabic graphs. Since the result in this section is not needed in the other part of the paper, we shall keep the section concise. We remark that we follow the reformulation of the BCFW recursion in \cite{BH15}. Therefore there is NO shift versus \cite[\S4 \& \S5]{KWZ}.

\begin{example}Let us first explain the translation between plabic graphs and Bruhat intervals vis an example. Details can be found in \cite[\S19 \& \S20]{Pos}.
We consider the plabic graph

\[
\begin{tikzpicture}[baseline=(current bounding box.center)]

\tikzstyle{out1}=[inner sep=0,minimum size=2.4mm,circle,draw=black,fill=black,semithick]
\tikzstyle{in1}=[inner sep=0,minimum size=2.4mm,circle,draw=black,fill=white,semithick]

\pgfmathsetmacro{\radius}{1.7};
\pgfmathsetmacro{\radiuss}{.5};
\pgfmathsetmacro{\shift}{.5};

\draw[thick](0,0)circle[radius=\radius];

\node[in1](s0)at(90:\shift){};

\node[inner sep=0](b2)at(135-1*90:\radius){};
\node at(135-1*90:\radius+.24){$2$};

\node[inner sep=0](b1)at(135-2*90:\radius){};
\node at(135-2*90:\radius+.24){$1$};

\node[inner sep=0](bn)at(-90:\radius){};
\node at(-90:\radius+.24){$5$};

\node[inner sep=0](bn-1)at(-135:\radius){};
\node[yshift=-1ex] at(-135:\radius+.24){\Large$\scriptstyle 4$};

\node[inner sep=0](bn-2)at(135:\radius){};
\node[xshift=-1ex] at(135:\radius+.24){\Large$\scriptstyle 3$};

\node[out1](i1)at(-45:\radius/2){};
\node[out1](in-1)at(-135:\radius/2){};
\node[in1](in)at(-90: \radius/2){};

\path[thick](b1.center)edge(i1) (b2.center)edge(s0) (in-1)edge(bn-1.center) (in)edge(bn.center) (s0)edge(bn-2.center) (i1)edge(in) (in)edge(in-1) (i1)edge(s0) (in-1)edge(s0);
\end{tikzpicture}
\]

We have the corresponding decorated permutation:
\[\pi = \begin{matrix}
	1 & 2 & 3 & 4 & 5 \\
	4 & 5 & 2 & 1 & 3
\end{matrix}\quad,
\]

with the set of anti-excedance
\[
	I(\pi) = \{1, 2 ,3\}.
\]
We then have the associated pipe dream and  $\Le$-diagram:
\vspace{.2cm}
\[
\begin{tikzpicture}[baseline=(current bounding box.center)]
\pgfmathsetmacro{\unit}{0.922};
\useasboundingbox(0,0)rectangle(2*\unit,-3*\unit);
\coordinate (vstep)at(0,-0.24*\unit);
\coordinate (hstep)at(0.17*\unit,0);
\draw[thick](0,0)--(2*\unit,0) (0,0)--(0,-3*\unit);
\node[inner sep=0]at(0,0){\scalebox{1.6}{\begin{ytableau}
\none \\
\none \\
\none \\
\none \\
\none & \none & \none & \none & \none & \elbow & \elbow & \none &\none &\none \\
\none & \none & \none & \none & \none & \cross & \elbow  & \none & \none & \none\\
\none & \none & \none & \none & \none &  \elbow &  \elbow & \none \\
\none & \none & \none & \none & \none & \none & \none
\end{ytableau}}};

\node[inner sep=0]at($(2*\unit,-0.5*\unit)+(hstep)$){$1$};
\node[inner sep=0]at($(2*\unit,-1.5*\unit)+(hstep)$){$2$};
\node[inner sep=0]at($(2*\unit,-2.5*\unit)+(hstep)$){$3$};
\node[inner sep=0]at($(1.5*\unit,-3*\unit)+(vstep)$){$4$};
\node[inner sep=0]at($(0.5*\unit,-3*\unit)+(vstep)$){$5$};

\node[inner sep=0]at($(1.5*\unit,0)-(vstep)$){$4$};
\node[inner sep=0]at($(0.5*\unit,0)-(vstep)$){$5$};
\node[inner sep=0]at($(0,-0.5*\unit)-(hstep)$){$1$};
\node[inner sep=0]at($(0,-1.5*\unit)-(hstep)$){$2$};
\node[inner sep=0]at($(0,-2.5*\unit)-(hstep)$){$3$};

\end{tikzpicture}
\qquad ,\qquad \qquad 
\begin{tikzpicture}[baseline=(current bounding box.center)]
\pgfmathsetmacro{\scalar}{1.6};
\pgfmathsetmacro{\unit}{\scalar*0.922/1.6};
\draw[thick](0,0)rectangle(2*\unit,-3*\unit);
\foreach \x in {1,...,2}{
\draw[thick](\x*\unit-\unit,0)rectangle(\x*\unit,-\unit);}
\foreach \x in {1,...,2}{
\draw[thick](\x*\unit-\unit,-\unit)rectangle(\x*\unit,-2*\unit);}
\foreach \x in {1,...,2}{
\draw[thick](\x*\unit-\unit,-2*\unit)rectangle(\x*\unit,-3*\unit);}
\node[inner sep=0]at(0.5*\unit,-0.5*\unit){\scalebox{\scalar}{$+$}};
\node[inner sep=0]at(1.5*\unit,-0.5*\unit){\scalebox{\scalar}{$+$}};
\node[inner sep=0]at(0.5*\unit,-1.5*\unit){\scalebox{\scalar}{$0$}};
\node[inner sep=0]at(1.5*\unit,-1.5*\unit){\scalebox{\scalar}{$+$}};
\node[inner sep=0]at(0.5*\unit,-2.5*\unit){\scalebox{\scalar}{$+$}};
\node[inner sep=0]at(1.5*\unit,-2.5*\unit){\scalebox{\scalar}{$+$}};

\end{tikzpicture}\quad .\]
\vspace{.2cm}

Finally, we read the Marsh-Rietsch parametrization from the $\Le$-diagram: 

\[
	\begin{tikzpicture}[baseline=(current bounding box.center)]
\pgfmathsetmacro{\scalar}{1.6};
\pgfmathsetmacro{\unit}{\scalar*0.922/1.6};
\draw[thick](0,0)rectangle(2*\unit,-3*\unit);
\foreach \x in {1,...,2}{
\draw[thick](\x*\unit-\unit,0)rectangle(\x*\unit,-\unit);}
\foreach \x in {1,...,2}{
\draw[thick](\x*\unit-\unit,-\unit)rectangle(\x*\unit,-2*\unit);}
\foreach \x in {1,...,2}{
\draw[thick](\x*\unit-\unit,-2*\unit)rectangle(\x*\unit,-3*\unit);}
\node[inner sep=0]at(0.5*\unit,-0.5*\unit){\scalebox{\scalar}{$2$}};
\node[inner sep=0]at(1.5*\unit,-0.5*\unit){\scalebox{\scalar}{$1$}};
\node[inner sep=0]at(0.5*\unit,-1.5*\unit){\scalebox{\scalar}{$\dot{3}$}};
\node[inner sep=0]at(1.5*\unit,-1.5*\unit){\scalebox{\scalar}{$2$}};
\node[inner sep=0]at(0.5*\unit,-2.5*\unit){\scalebox{\scalar}{$4$}};
\node[inner sep=0]at(1.5*\unit,-2.5*\unit){\scalebox{\scalar}{$3$}};

\end{tikzpicture}=
3214\dot{3}2 = \CP^J_{s_3, s_{3} s_{2} s_{1} s_{4} s_{3} s_{2}; >0}.
\]
\vspace{.2cm}
\end{example}

\begin{proposition} \label{prop:diagram}
The plabic graphs/on-shell diagram  expansion of  Theorem~\ref{thm:main} can be represented as follows: 
$$
\quad\begin{tikzpicture}[baseline=(current bounding box.center)]
\pgfmathsetmacro{\radius}{1.7};
\draw[thick](0,0)circle[radius=\radius];
\node[]at (0,0) {\Large $\CA_{\scriptstyle n,k,2}$};
\end{tikzpicture}
\quad
=
\quad
\begin{tikzpicture}[baseline=(current bounding box.center)]
\pgfmathsetmacro{\radius}{1.7};
\tikzstyle{out1}=[inner sep=0,minimum size=2.4mm,circle,draw=black,fill=black,semithick]
\tikzstyle{in1}=[inner sep=0,minimum size=2.4mm,circle,draw=black,fill=white,semithick]
\draw[thick](0,0)circle[radius=\radius];

\node[circle,draw=black,fill=white,semithick] (Ak) at (0,0.5) {$\scriptstyle \CA_{ n-1,k,2}$};

\node[inner sep=0](bn-1)at(135:\radius){};
\node[xshift=-1ex] at(135:\radius+.24){\Large$\scriptstyle n-1$};

\node[inner sep=0](b1)at(135-1*90:\radius){};
\node at(135-1*90:\radius+.24){$1$};

\node[out1](in)at(-90: \radius/2){};
\node[inner sep=0](bn)at(-90:\radius){};
\node at(-90:\radius+.24){$n$};
\node[thick] at(90:\radius/2+.6){$\dots$};
\path[thick](bn.center)edge(in) (bn-1)edge(Ak) (b1)edge(Ak);
\end{tikzpicture}
\quad
+ 
\quad\begin{tikzpicture}[baseline=(current bounding box.center)]

\tikzstyle{out1}=[inner sep=0,minimum size=2.4mm,circle,draw=black,fill=black,semithick]
\tikzstyle{in1}=[inner sep=0,minimum size=2.4mm,circle,draw=black,fill=white,semithick]

\pgfmathsetmacro{\radius}{1.7};
\draw[thick](0,0)circle[radius=\radius];

\node[circle,draw=black,fill=white,semithick] (Ak-1) at (0,0.5) {$\scriptstyle \CA_{ n-1,k-1,2}$};

\node[inner sep=0](b2)at(135-1*90:\radius){};
\node at(135-1*90:\radius+.24){$2$};

\node[inner sep=0](b1)at(135-2*90:\radius){};
\node at(135-2*90:\radius+.24){$1$};

\node[inner sep=0](bn)at(-90:\radius){};
\node at(-90:\radius+.24){$n$};

\node[inner sep=0](bn-1)at(-135:\radius){};
\node[yshift=-1ex] at(-135:\radius+.24){\Large$\scriptstyle n-1$};

\node[inner sep=0](bn-2)at(135:\radius){};
\node[xshift=-1ex] at(135:\radius+.24){\Large$\scriptstyle n-2$};

\node[out1](i1)at(-45:\radius/2){};
\node[out1](in-1)at(-135:\radius/2){};
\node[in1](in)at(-90: \radius/2){};

\path[thick](b1.center)edge(i1) (b2.center)edge(Ak-1) (in-1)edge(bn-1.center) (in)edge(bn.center) (Ak-1)edge(bn-2.center) (i1)edge(in) (in)edge(in-1) (i1)edge node[above, pos=0.8] {$\scriptstyle 2'$} (Ak-1) (in-1)edge node[above, pos=0.8] {$\scriptstyle 1'$} (Ak-1);

\node[thick] at(90:\radius/2+.7){$\dots$};
\end{tikzpicture}.
$$
\end{proposition}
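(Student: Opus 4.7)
The plan is to match each of the two summands on the right-hand side of Theorem~\ref{thm:main} to one of the two diagrams in the statement, via the standard dictionary between cells of $\Gr^{\ge 0}_{k,n}$ and reduced plabic graphs from \cite{Pos}. The relevant entries of this dictionary are the following standard facts, each of which is a direct reading of the defining formulas of \S\ref{sub:emb}--\S\ref{subsec:Lu}: the embedding $\iota_{\text{pre}}$ corresponds to attaching a black lollipop at the newly inserted boundary $n$ (since the appended zero row forces position $n$ never to belong to any basis of the matroid); $\iota_{\text{inc}}$ corresponds to attaching a white lollipop at the newly inserted boundary (since the appended first column is a standard unit vector, forcing that position into every basis); the factor $y_i(a)$ for $a>0$ corresponds to attaching a BCFW bridge between boundaries $i$ and $i+1$; and the cyclic shift $\sigma_{k,n}$ corresponds to cyclically relabeling the boundary vertices.

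For the first summand $\iota_{\text{pre}}(\CC_{n-1,k,2})$, the translation is immediate from the dictionary above: each cell of $\iota_{\text{pre}}(\CC_{n-1,k,2})$ is represented by the plabic graph of the corresponding cell of $\CC_{n-1,k,2}$, drawn inside the $\CA_{n-1,k,2}$ bubble attached to boundaries $1, 2, \ldots, n-1$, together with a black lollipop at boundary $n$. This matches the first diagram on the nose.

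For the second summand $\sigma_{k,n}^{-2}\cdot U^{-}(s_{2}s_{1})\cdot \iota_{\text{inc}}(\CC_{n-1,k-1,2})$, I would apply the three operations in sequence. First, $\iota_{\text{inc}}$ produces the bubble $\CA_{n-1,k-1,2}$ attached to boundaries $2, \ldots, n$ with a white lollipop at boundary $1$. Next, $U^{-}(s_{2}s_{1})=\{y_{2}(b)y_{1}(a)\mid a,b>0\}$ attaches a bridge between $1,2$ and a bridge between $2,3$. Applying the standard plabic-graph reduction moves of \cite[\S12]{Pos} to absorb the white lollipop at $1$ into the adjacent bridge, these combine into the three-vertex sub-diagram pictured: a black internal vertex adjacent to boundary $1$, a white internal vertex adjacent to boundary $2$, and a black internal vertex adjacent to boundary $3$, joined in a path, with the two black vertices connected by internal legs to the smaller bubble. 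Finally, $\sigma_{k,n}^{-2}$ cyclically relabels boundaries by $i\mapsto i-2 \pmod n$, transporting the sub-diagram from $\{1,2,3\}$ to $\{n-1, n, 1\}$ and the remaining connections from $\{4,\ldots,n\}$ to $\{2,\ldots,n-2\}$. The result is exactly the second diagram, with the internal edges to the smaller bubble labelled $1'$ and $2'$ as depicted.

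The main obstacle is the local plabic-graph reduction in the middle step: verifying that the combination of the white lollipop at $1$ with the two bridges $y_{1}(a)$ and $y_{2}(b)$ reduces to the specific three-vertex sub-diagram pictured, with the displayed colors and connectivity of the internal vertices. This is a single local computation that can be carried out either by writing down the associated $\Le$-diagram of the cell $\CP^{J}_{1,\,s_{[2,1]}s_{[n-k+1,2]}\cdots s_{[n-1,k]};>0}$ appearing in the proof of Proposition~\ref{prop:cyc} and reading off the corresponding plabic graph, or by a direct application of the square and contraction moves of \cite[\S12]{Pos}. Once this local identification is established, the remaining bookkeeping is formal and the two summands of Theorem~\ref{thm:main} match the two diagrams in the claim.
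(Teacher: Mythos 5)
Your proposal follows the same dictionary and the same four-step sequence as the paper's sketch: black lollipop at $n$ for $\iota_{\text{pre}}$, white lollipop at $1$ for $\iota_{\text{inc}}$, BCFW bridges for $y_1$ and $y_2$, then the $\sigma^{-2}$ relabeling, with the local reduction (absorbing the white lollipop into the adjacent bridges to get the three internal vertices) deferred to a standard plabic-graph check. This is essentially the paper's own argument, and your identification of the intermediate three-vertex structure after the reduction is consistent with what the paper's step-by-step diagrams produce before the final rotation.
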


\begin{remark}
Note that the labels of the two internal edges are irrelevant to the triangulation, thanks to the $\sigma_{k-1,n-1}$-symmetry of the triangulation of $\CA_{n-1,k-1,2}$ in Corollary~\ref{cor:cyclic}. However, different labeling results in different collection of cells in $\CC_{n,k,2}$.
Here the labels $1'$ and $2'$ match the formulation as stated in Theorem~\ref{thm:main}.
\end{remark}

The diagrammatic expression follows immediately by translating Theorem~\ref{thm:main} into plabic graphs. We sketch a proof here. Details on the combinatorics can be found in \cite{KWZ, Pos}

\begin{proof}[Sketch of proof]
First notice that adding a black lollipop labeled $n$ is precisely the $k$-preserving embedding. This takes care of the first summand. For the second summand, we have four steps.

	(1) The $k$-increasing embedding:  
\[
\begin{tikzpicture}[baseline=(current bounding box.center)]
\pgfmathsetmacro{\radius}{2};
\tikzstyle{out1}=[inner sep=0,minimum size=2.4mm,circle,draw=black,fill=black,semithick]
\tikzstyle{in1}=[inner sep=0,minimum size=2.4mm,circle,draw=black,fill=white,semithick]
\draw[thick](0,0)circle[radius=\radius];
\node[circle,draw=black,fill=white,semithick] (Ak) at (0,0.5) {$\scriptstyle \CA_{ n-1,k-1,2}$};

\node[inner sep=0](bn-1)at(135:\radius){};
\node[xshift=-5ex] at(135:\radius+.24){\Large$\scriptstyle n (=(n-1)')$};

\node[inner sep=0](b1)at(135-1*90:\radius){};
\node[xshift=3ex]  at(135-1*90:\radius+.24){$2 (=1')$};

\node[in1](in)at(-90: \radius/2){};
\node[inner sep=0](bn)at(-90:\radius){};
\node at(-90:\radius+.24){$1$};
\path[thick](bn.center)edge(in) (bn-1)edge(Ak) (b1)edge(Ak);
\node[thick] at(90:\radius/2+.65){$\dots$};
\end{tikzpicture}
\]

	(2) Multiplication by $y_1$:
	\[
\begin{tikzpicture}[baseline=(current bounding box.center)]
\pgfmathsetmacro{\radius}{2};
\tikzstyle{out1}=[inner sep=0,minimum size=2.4mm,circle,draw=black,fill=black,semithick]
\tikzstyle{in1}=[inner sep=0,minimum size=2.4mm,circle,draw=black,fill=white,semithick]
\draw[thick](0,0)circle[radius=\radius];

\node[circle,draw=black,fill=white,semithick] (Ak) at (0,0.5) {$\scriptstyle \CA_{ n-1,k-1,2}$};

\node[inner sep=0](bn)at(90:\radius){};
\node at(90:\radius+.24){\Large$\scriptstyle n $};

\node[inner sep=0](b2)at(-45:\radius){};
\node  at(-45:\radius+.24){$2$};

\node[out1](s2)at(-45:\radius/2){};

\node[in1](i1)at(250: \radius/3+0.1){};

\node[inner sep=0](b1)at(250:\radius){};
\node at(250:\radius+.24){$1$};

\node[in1](s1)at(250: \radius*2/3){};

\path[thick](bn)edge(Ak) (b2)edge(s2) (s2)edge(Ak) (i1)edge(s1) (s1)edge(b1) (s1)edge(s2);
\node[thick] at(60:\radius/2+.6){$.$};
\node[thick] at(40:\radius/2+.6){$.$};
\node[thick] at(20:\radius/2+.6){$.$};
\node[thick] at(0:\radius/2+.6){$.$};
\node[thick] at(-20:\radius/2+.6){$.$};
\end{tikzpicture}
\]

	(3) Multiplication by $y_2$: 
	\[
	\begin{tikzpicture}[baseline=(current bounding box.center)]
\pgfmathsetmacro{\radius}{2};
\tikzstyle{out1}=[inner sep=0,minimum size=2.4mm,circle,draw=black,fill=black,semithick]
\tikzstyle{in1}=[inner sep=0,minimum size=2.4mm,circle,draw=black,fill=white,semithick]
\draw[thick](0,0)circle[radius=\radius];

\node[circle,draw=black,fill=white,semithick] (Ak) at (0,0.5) {$\scriptstyle \CA_{ n-1,k-1,2}$};

\node[inner sep=0](bn)at(90:\radius){};
\node at(90:\radius+.24){\Large$\scriptstyle n $};

\node[inner sep=0](b3)at(-60:\radius){};
\node  at(-60:\radius+.24){$3$};
\node[out1](t3)at(-60:\radius*2/3){};

\node[inner sep=0](b2)at(-110:\radius){};
\node  at(-110:\radius+.24){$2$};

\node[out1](s2)at(-110:\radius/2){};

\node[in1](t2)at(-110:\radius*3/4){};

\node[in1](i1)at(180: \radius/2+0.5){};

\node[inner sep=0](b1)at(220:\radius){};
\node at(220:\radius+.34){\Large $\scriptstyle 1$};

\node[in1](s1)at(220: \radius*2/3){};

\path[thick](bn)edge(Ak) (b2)edge(t2) (t2)edge(s2) (s2)edge(Ak) (i1)edge(s1) (s1)edge(b1) (s1)edge(s2) (t2)edge(t3) (Ak)edge(t3) (t3)edge(b3);
\node[thick] at(60:\radius/2+.6){$.$};
\node[thick] at(40:\radius/2+.6){$.$};
\node[thick] at(20:\radius/2+.6){$.$};
\node[thick] at(0:\radius/2+.6){$.$};
\node[thick] at(-20:\radius/2+.6){$.$};
\end{tikzpicture}
\]

	(4) Applying the rotation $\sigma^{-2}_{k,n}$:
	\[
	\begin{tikzpicture}[baseline=(current bounding box.center)]
\pgfmathsetmacro{\radius}{2};
\tikzstyle{out1}=[inner sep=0,minimum size=2.4mm,circle,draw=black,fill=black,semithick]
\tikzstyle{in1}=[inner sep=0,minimum size=2.4mm,circle,draw=black,fill=white,semithick]
\draw[thick](0,0)circle[radius=\radius];

\node[circle,draw=black,fill=white,semithick] (Ak) at (0,0.5) {$\scriptstyle \CA_{ n-1,k-1,2}$};

\node[inner sep=0](bn)at(90:\radius){};
\node at(90:\radius+.24){\Large$\scriptstyle n-2 $};

\node[inner sep=0](b3)at(-60:\radius){};
\node  at(-60:\radius+.24){$1$};
\node[out1](t3)at(-60:\radius*2/3){};

\node[inner sep=0](b2)at(-110:\radius){};
\node  at(-110:\radius+.24){$n$};

\node[out1](s2)at(-110:\radius/2){};

\node[in1](t2)at(-110:\radius*3/4){};

\node[in1](i1)at(180: \radius/2+0.5){};

\node[inner sep=0](b1)at(220:\radius){};
\node at(220:\radius+.34){\Large $\scriptstyle n-1$};

\node[in1](s1)at(220: \radius*2/3){};

\path[thick](bn)edge(Ak) (b2)edge(t2) (t2)edge(s2) (s2)edge(Ak) (i1)edge(s1) (s1)edge(b1) (s1)edge(s2) (t2)edge(t3) (Ak)edge(t3) (t3)edge(b3);
\node[thick] at(60:\radius/2+.6){$.$};
\node[thick] at(40:\radius/2+.6){$.$};
\node[thick] at(20:\radius/2+.6){$.$};
\node[thick] at(0:\radius/2+.6){$.$};
\node[thick] at(-20:\radius/2+.6){$.$};
\end{tikzpicture}
\]
Finally, it is straightforward to check that this plabic graph corresponds to the same collection of cells as the desired graph. 
\end{proof}

\bibliography{Ref}
\bibliographystyle{amsalpha}

\end{document}